\newtheorem{lemma}{Lemma}
\newtheorem{proposition}{Proposition}
\newtheorem{theorem}{Theorem}
\newtheorem{conjecture}{Conjecture}
\newcommand{\assign}{:=}
\newcommand{\mathd}{\mathrm{d}}
\newcommand{\mathi}{\mathrm{i}}
\begin{document}

\title{Large deviations in Selberg's central limit theorem}

\begin{abstract}
Following Selberg \cite{Selberg} it is known that as $T \rightarrow \infty$,
$$
\frac{1}{T} \underset{t \in [T;2T]}{\text{meas}}
\left \{ \log | \zeta(\tfrac 12 + \mathi t) |
\geq \Delta \sqrt{\tfrac 12 \log \log T} \right \}
\sim \int_{\Delta}^{\infty} e^{-u^2/2} \frac{\mathd u}{\sqrt{2 \pi}}
$$
uniformly in $\Delta \leq (\log\log\log T)^{1/2 - \varepsilon}$. 
We extend the range of $\Delta$ to
$\Delta \ll (\log \log T)^{1/10 - \varepsilon}$. We also speculate
on the size of the largest $\Delta$ for which the above normal
approximation can hold and on the correct approximation beyond
this point. 

\end{abstract}

\author{Maksym Radziwi\l\l}
\address{Department of Mathematics \\ Stanford University\\
450 Serra Mall, Bldg. 380\\
Stanford, CA 94305-2125}
\email{maksym@stanford.edu}
\thanks{The author is partially supported by a NSERC PGS-D award}
\subjclass[2010]{Primary: 11M06, Secondary: 11M50}

\maketitle

\section{Introduction.}


The value-distribution of $\log \zeta(\sigma + \mathi t)$  
is a classical question in the theory of the Riemann zeta function. When $\sigma > 1/2$
this distribution is well-understood and is that of an almost surely convergent
sequence of random variables (see \cite{Youness}, \cite{Matsumoto} , 
\cite{LaurinchikasBook} or \cite{Steuding}).

The half-line is special, since for $\sigma = \tfrac 12$ we 
have a \textit{central
limit theorem},
\begin{equation}
  \label{selberg} \frac{1}{T} \cdot \underset{ t \in [T;2T]} 
  {\text{meas}} \left \{
   \frac{\log | \zeta (\tfrac 12 + \mathi t)|}{\sqrt{\tfrac 12 \log\log T}} 
\geqslant \Delta \right \} =
  \int_{\Delta}^{\infty} e^{- u^2 / 2} \cdot \frac{\mathd u}{\sqrt{2
  \pi}} + o (1) \text{ , } T \rightarrow \infty
\end{equation}
originally due to Selberg \cite{Selberg}. 
Whereas the distribution of large values of $\log |\zeta(\sigma + \mathi t)|$ with
$\sigma > 1/2$ has been consistently studied since the pioneering work of Bohr and
Jessen \cite{Bohr}, the corresponding question on the half-line has only attracted more attention recently.

Conditionally on the Riemann Hypothesis, Soundararajan \cite{SoundM} obtained Gaussian upper bounds for the
left-hand side of (\ref{selberg}) (focusing mostly on the range $\Delta \gg \sqrt{\log\log T}$). 
As an application he derived near optimal upper 
bounds for moments of the Riemann zeta-function.

In this paper we will be interested in asymptotic formulas for the left-hand
side of (\ref{selberg}) when $\Delta \rightarrow \infty$ as $T \rightarrow \infty$. 
Previously asymptotic formulae of the form
\begin{equation}
  \frac{1}{T} \cdot \underset{t \in [T;2T]} {\text{meas}} 
  \left\{
  \frac{\log | \zeta ( \tfrac 12 + \mathi t) |}{\sqrt{
      \tfrac 12 \log\log T}} \geqslant \Delta \right\} \sim
  \int_{\Delta}^{\infty} e^{- u^2 / 2} \cdot \frac{\mathd u}{\sqrt{2
  \pi}} \label{asympt3}
\end{equation}
where known only in the range $\Delta \ll (\log\log\log T)^{1/2 - \varepsilon}$
as a consequence of Selberg's \cite{Tsang} near-optimal refinement of the error term in (\ref{selberg}) \footnote{Conjecturally (\ref{selberg}) admits an 
asymptotic expansion in powers of $(\log\log T)^{-1/2}$. Selberg's
obtained an
error term of
$O((\log\log\log T)^2 (\log\log T)^{-1/2})$ in (\ref{selberg}),
which is thus close to optimal in this sense.}
In this paper we introduce a new method that allows us to extend
(\ref{asympt3}) to the large-deviations range $\Delta \ll (\log\log T)^{\alpha}$ for
some small but fixed $\alpha > 0$.
\begin{theorem}
  \label{Theorem1}The asymptotic formulae (\ref{asympt3}) holds for $\Delta \ll
  (\log\log T)^{1 / 10 - \varepsilon}$.
\end{theorem}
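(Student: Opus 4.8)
The plan is to replace $\log|\zeta(\tfrac12+\mathi t)|$ by a short Dirichlet polynomial over primes, prove a precise exponential‐moment estimate for that polynomial, and convert it into the desired tail probability via an exponential change of measure. First, using a version of Selberg's explicit formula, for a parameter $X=T^{1/(\log\log T)^{c}}$ (with $c>0$ fixed at the end, so that $\log\log X\sim\log\log T$ while $X$ is only a small power of $T$) one writes, for $t\in[T,2T]$,
$$\log|\zeta(\tfrac12+\mathi t)|=P(t)+E(t),\qquad P(t):=\Re\!\sum_{p\le X}\frac{1}{p^{1/2+\mathi t}},$$
where the prime‑power terms and the shift from $\tfrac12+\tfrac1{\log X}$ down to $\tfrac12$ contribute only $O(1)$ (absorbed into $E$), and $E(t)$ is controlled by the zeros of $\zeta$ in a short window about $t$. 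Set $V:=\tfrac12\sum_{p\le X}p^{-1}=\tfrac12\log\log X+O(1)\sim\tfrac12\log\log T$, the variance of $P$. It then suffices to obtain an asymptotic for $\tfrac1T\,\mathrm{meas}\{t\in[T,2T]:P(t)\ge\Delta\sqrt V\}$ together with a concentration bound for $E$.

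The key new ingredient is a precise exponential moment for $P$: for complex $z$ with $|z|\le(\log\log T)^{\kappa}$ ($\kappa>0$ small, suitably chosen),
$$\frac1T\int_T^{2T}e^{zP(t)}\,\mathd t=(1+o(1))\prod_{p\le X}I_0\!\left(\frac{z}{\sqrt p}\right)=(1+o(1))\,e^{z^2V/2}.$$
I would prove this by writing $e^{zP(t)}=\prod_{p\le X}e^{z\cos(t\log p)/\sqrt p}$ and truncating the Taylor expansion at order $L=\lfloor(1-\varepsilon)\log T/\log X\rfloor$, so that the resulting Dirichlet polynomial has length $\le X^{L}\le T^{1-\varepsilon}$; the mean value theorem for Dirichlet polynomials then reproduces on the diagonal the Gaussian moments $\tfrac1T\int_T^{2T}P(t)^{2m}\,\mathd t=(1+o(1))\,(2m)!\,V^{m}/(2^{m}m!)$ for $2m\le L$, with a negligible off‑diagonal, while the Taylor remainder is handled by a second‑moment estimate. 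This forces $L\gg|z|^{2}V$, which is where the admissible sizes of $X$ and $|z|$ become constrained.

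Next I would bound the high moments $\tfrac1T\int_T^{2T}|E(t)|^{2k}\,\mathd t$ via Selberg's formula and the (unconditionally) computable moments of the number of zeros of $\zeta$ in short intervals, valid for $k$ up to roughly $\log T/\log X$; this should give stretched‑exponential tail bounds of the shape $\tfrac1T\,\mathrm{meas}\{t:|E(t)|>y\}\ll e^{-c\sqrt y}$ in the relevant range of $y$. For the final comparison one needs, with $\delta=\delta(T)\to0$ slightly faster than $1/\Delta$ (so that $\Phi^c(\Delta\pm\delta)\sim\Phi^c(\Delta)$, where $\Phi^c(\Delta):=\int_\Delta^\infty e^{-u^2/2}\,\frac{\mathd u}{\sqrt{2\pi}}$), the bound $\tfrac1T\,\mathrm{meas}\{t:|E(t)|>\delta\sqrt V\}=o(\Phi^c(\Delta))=o(e^{-\Delta^2/2}/\Delta)$. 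Plugging in the tail estimate, this forces $\delta\sqrt V\gg\Delta^{4}$; together with $\delta\ll1/\Delta$ it yields $\Delta^{5}\ll\sqrt V\asymp(\log\log T)^{1/2}$, i.e. precisely $\Delta\ll(\log\log T)^{1/10-\varepsilon}$. \textbf{I expect this balancing — the strength of the available concentration estimate for the Selberg error $E$ against the length constraint on $X$ — to be the main obstacle, and it is what determines the exponent $1/10$.}

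Finally, a tilting argument turns the exponential moment into the tail probability. Fix $\Delta$ in range, put $\lambda=\Delta/\sqrt V$ (small, since $\Delta\ll\sqrt V$), and let $\mathd\nu(t)=e^{\lambda P(t)}\,\mathd t\big/\!\int_T^{2T}e^{\lambda P(t')}\,\mathd t'$. By the previous step at $z=\lambda+\mathi s$, the characteristic function of $P$ under $\nu$ is $(1+o(1))\,e^{\mathi s\lambda V}e^{-s^{2}V/2}$, so $P$ is asymptotically $\mathcal N(\lambda V,V)$ under $\nu$; a Fourier inversion with a band‑limited (Beurling–Selberg / Graham–Vaaler type) approximation of $u\mapsto e^{-\lambda u}\mathbf 1_{u\ge\Delta\sqrt V}$ then gives
$$\frac1T\,\mathrm{meas}\{t:P(t)\ge\Delta\sqrt V\}=\Big(\frac1T\!\int_T^{2T}\!e^{\lambda P}\Big)\!\int_{\{P\ge\Delta\sqrt V\}}\!\!e^{-\lambda P}\,\mathd\nu=(1+o(1))\,e^{\lambda^{2}V/2}\!\int_{\Delta\sqrt V}^{\infty}\!\!e^{-\lambda u}\,\frac{e^{-(u-\lambda V)^{2}/(2V)}}{\sqrt{2\pi V}}\,\mathd u.$$
Substituting $u=\Delta\sqrt V+y\sqrt V$ and using $\lambda\sqrt V=\Delta$, the right‑hand side equals $(1+o(1))\,e^{-\Delta^{2}/2}/(\sqrt{2\pi}\,\Delta)=(1+o(1))\,\Phi^c(\Delta)$. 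Combining this with the concentration of $E$ — which lets one replace $P$ by $\log|\zeta(\tfrac12+\mathi t)|$ up to an error $\delta\sqrt V$ outside a set of measure $o(\Phi^c(\Delta))$, in both directions — yields the claimed asymptotic $(\ref{asympt3})$ for $\Delta\ll(\log\log T)^{1/10-\varepsilon}$.
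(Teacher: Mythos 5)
Your strategy is essentially the paper's: split $\log|\zeta(\tfrac12+\mathi t)|$ into the prime sum $P(t)$ plus a Selberg error $E(t)$; establish a Gaussian tail for $P$ via a precise exponential‐moment/MGF estimate (the content of Proposition~\ref{Proposition2}); obtain a stretched‐exponential tail $e^{-c\sqrt y}$ for $E$ from Lemma~\ref{Selberg_lemma} via Markov and optimization in $k$; then balance the two sources of error, and the same $\delta\sqrt V\gg\Delta^4$ versus $\delta=o(1/\Delta)$ trade‐off yields $\Delta^5\ll\sqrt{\log\log T}$, i.e.\ the exponent $1/10$. Your tilting plus Fourier‐inversion step is exactly what Hwang's lemma (Lemma~\ref{Hwang}) packages.

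The one place you diverge from the paper is the MGF computation for $P$, and it is worth flagging because the paper singles this point out as its key new idea. You propose to Taylor‐truncate $e^{zP}$ at order $L\approx(1-\varepsilon)\log T/\log X$ and integrate over all of $[T,2T]$, handling the Taylor remainder by a second‐moment bound. The paper instead first restricts to the set $A$ where $|P(t)|\le\log_2 T$ (shown to be almost all of $[T,2T]$ using Lemma~\ref{SoundLemma}); on $A$ the Taylor series is \emph{uniformly} rapidly convergent, only $\asymp\log\log T$ moments of $P$ are needed, and the remainder control is trivial. Then Cauchy--Schwarz transfers each moment from $A$ back to $[T,2T]$, where it is evaluated by an Euler‐product identity (Lemmas~\ref{Lemma1}--\ref{Lemma2}). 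Your version is plausible but needs more care than you indicate: $P(t)$ can be as large as $\asymp\sqrt X$ on a tiny set, so bounding $\int_T^{2T}|R_L(t)|\,\mathd t$ requires exploiting the sub‐Gaussian moment bound $\int|P|^{2m}\ll m!\,T\,V^m$ together with the constraint $L\gg|z|^2V$ to make the tail of the series $\sum_{m>L}(|z|\sqrt V)^m/\sqrt{m!}$ negligible \emph{relative} to the main term $e^{\Re(z^2)V/2}$, which can itself be small when $\Re(z^2)<0$. In your application $\Re z=\lambda=\Delta/\sqrt V$ and $|\Im z|\ll1/\sqrt V$, so $\Re(z^2)\ge0$ and this works out — but the argument is more delicate than the set‐$A$ device, which is designed precisely to avoid it. Apart from this, the skeleton of the proof, the use of Selberg's $k^{4k}$ bound, the tail estimate for $E$, and the final balancing that produces the exponent $1/10$ all match the paper.
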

Our method is very versatile and allows to extend most of the known distribution 
results about $\zeta(\sigma + \mathi t)$ (with $\sigma = 1/2 + o_{T \rightarrow \infty}(1)$) to a 
large deviations setting (for example \cite{Bourgade} or the joint value distribution
of $\Re \log \zeta(\tfrac 12 + \mathi t)$ and $\Im \log \zeta(\tfrac 12 + \mathi t)$).
The method also adapts to the
study of the distribution of large values of additive functions over sets of integers 
where only moments (but not moment generating functions) are available. 

To prove Theorem \ref{Theorem1} we use Selberg's work to reduce the problem to a question
about Dirichlet polynomials. Theorem \ref{Theorem1} then follows from the Proposition below,
which might be of independent interest.

\begin{proposition}
  \label{Proposition2}Let $x = T^{1 /
  (\log\log T)^2}$. Then, uniformly in $\Delta = o (
  \sqrt {\log\log T})$,
  \begin{equation}
    \frac{1}{T} \cdot \underset{t \in [T;2T]} {\text{meas}} 
    \left\{ \Re
    \sum_{p \leqslant x} \frac{1}{p^{\tfrac 12 + \mathi t}} \geqslant \Delta
    \cdot \sqrt{\tfrac 12 \sum_{p \leqslant x} \frac{1}{p}} \right\} \sim
    \int_{\Delta}^{\infty} e^{- u^2 / 2} \cdot \frac{\mathd u}{\sqrt{2
    \pi}} \label{asympt2}
  \end{equation}
\end{proposition}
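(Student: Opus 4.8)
The plan is to establish the Gaussian asymptotic for the Dirichlet polynomial $\Re\sum_{p\le x}p^{-1/2-\mathrm it}$ by the method of moments, carefully quantified so that the bound $\Delta = o(\sqrt{\log\log T})$ is allowed. Write $\sigma_x^2 \assign \tfrac12\sum_{p\le x}\tfrac1p \sim \tfrac12\log\log T$ (by Mertens, since $\log x = T^{1/(\log\log T)^2}$ gives $\log\log x = \log\log T - 2\log\log\log T$). The first step is the standard moment computation: for integers $k$,
$$
\frac{1}{T}\int_T^{2T}\Bigl(\Re\sum_{p\le x}\frac{1}{p^{1/2+\mathrm it}}\Bigr)^{2k}\,\mathrm dt = (2k-1)!!\,\sigma_x^{2k}\bigl(1+O(\text{error})\bigr),
$$
with an analogous vanishing (up to small error) of odd moments. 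The mechanism is the orthogonality relation $\frac1T\int_T^{2T}(m/n)^{\mathrm it}\,\mathrm dt \ll \min(1, T/|m-n|)$ together with the fact that, after expanding the $2k$-th power, the diagonal terms $m=n$ contribute exactly $(2k-1)!!\sigma_x^{2k}$ when all prime factors are $\le x$ and $x^{2k}\le T$, i.e. $2k \le (\log\log T)^2$. This is the arithmetic analogue of Wick's theorem; the key point, which I would track explicitly, is that $2k$ may be taken as large as a small power of $\log\log T$ while the off-diagonal error remains negligible.

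The second step is to convert these moment estimates into the tail asymptotic by a large-deviations argument. Let $S \assign \Re\sum_{p\le x}p^{-1/2-\mathrm it}$ and normalize $X \assign S/\sigma_x$. The moment bounds say $\mathbb E[X^{2k}] = (2k-1)!!(1+o(1))$ uniformly for $k$ up to roughly $(\log\log T)^2$. To get the tail at level $\Delta = o(\sqrt{\log\log T})$, I would apply Markov/Chebyshev in the form $\mathrm{meas}\{X \ge \Delta\} \le \Delta^{-2k}\mathbb E[X^{2k}]$, optimized at $2k \approx \Delta^2$; since $\Delta^2 = o(\log\log T) \ll (\log\log T)^2$, this is well within the admissible range and yields an upper bound $\ll e^{-(1+o(1))\Delta^2/2}$, matching $\int_\Delta^\infty e^{-u^2/2}\,\mathrm du/\sqrt{2\pi}$ up to the $(1+o(1))$ in the exponent. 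Upgrading this to a genuine asymptotic requires a sharper device: I would use the moments to control the characteristic function or, more robustly, compare against an auxiliary sum of independent random variables $\Re\sum_{p\le x}\tfrac{1}{\sqrt p}\,\mathrm{Re}(e^{\mathrm i\theta_p})$ with $\theta_p$ i.i.d.\ uniform — the two agree in all moments up to order $\sim(\log\log T)^2$, and the classical large-deviation principle (Cramér, or a Berry–Esseen-type bound with explicit error) for the independent model gives exactly $(\ref{asympt2})$ in the range $\Delta = o(\sigma_x)$, provided the matching of moments is quantitative enough to transfer the conclusion.

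The main obstacle is the second step rather than the first: the moment computation is routine Dirichlet-polynomial bookkeeping, but converting ``all moments up to order $K$ agree with Gaussian moments'' into ``the tail at $\Delta$ is asymptotically Gaussian'' is delicate near the edge $\Delta \asymp \sqrt{K}$. A crude moment bound only controls the exponential rate, not the constant or the polynomial prefactor $1/(\Delta\sqrt{2\pi})$ implicit in $\int_\Delta^\infty e^{-u^2/2}\,\mathrm du$. The fix I anticipate is a two-sided argument: the upper bound via $\mathbb E[X^{2k}]$ with $2k\approx\Delta^2 + c\Delta$, and a matching lower bound obtained either by exhibiting enough ``mass'' near level $\Delta$ (a second-moment/concentration argument on the set where $X$ is close to $\Delta$) or by a tilting/change-of-measure on the independent model and then transferring via the moment match. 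One must check that the multiplicative factor lost at each transfer is $1+o(1)$, which forces the moment identities to carry an error term that is $o(1)$ relative to $(2k-1)!!\sigma_x^{2k}$ for all $k\le C\Delta^2$ — and this in turn is exactly what the condition $\Delta = o(\sqrt{\log\log T})$ buys, since it keeps $k$ safely below the threshold $(\log\log T)^2$ at which the off-diagonal terms would start to matter.
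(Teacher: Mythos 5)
Your plan correctly identifies the two phases (compute moments, then convert to a tail asymptotic) and correctly diagnoses that the second phase is the hard part, but the proposal for that phase has a genuine gap. What you propose --- matching moments with an independent random model up to order $\sim(\log\log T)^2$ and then ``transferring'' the Cram\'er/Berry--Esseen conclusion from the model --- is precisely what lacks a mechanism. Agreement of moments up to order $K$ does not by itself pin down the tail at level $\Delta\asymp\sqrt{K}$ with relative error $o(1)$: as you yourself observe, Markov with $2k\approx\Delta^2$ only recovers the exponent $e^{-\Delta^2/2}$ and loses a polynomial factor, and there is no generic ``transfer lemma'' that upgrades moment agreement into the precise asymptotic $\int_\Delta^\infty e^{-u^2/2}\,\mathd u/\sqrt{2\pi}$. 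The clause ``provided the matching of moments is quantitative enough to transfer the conclusion'' names the hole rather than filling it; this is exactly where the proof would stall.

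The device the paper uses is more specific and is what makes the argument close. One restricts to the set $A\subset[T;2T]$ on which $|\Re\sum_{p\le x}p^{-1/2-\mathi t}|\le\log\log T$; by Lemma~\ref{SoundLemma} (Soundararajan's moment bound with $k\approx\log\log T$) the complement has measure $\ll T(\log T)^{-\delta}$. On $A$ the Taylor expansion of the moment generating function $\int_A\exp\bigl(z\,\Re\sum_{p\le x}p^{-1/2-\mathi t}\bigr)\mathd t$, for $|z|\le\varepsilon$, is dominated by the terms with $k\le3\log\log T$, and each moment over $A$ agrees (via Cauchy--Schwarz against $\mathrm{meas}(A^c)$) with the moment over $[T;2T]$, which Lemma~\ref{Lemma2} gives exactly as a Cauchy coefficient of $\prod_{p\le x}I_0(z/\sqrt p)$. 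Resumming the series yields that the MGF equals $TF(z)\exp\bigl(\tfrac{z^2}{2}\cdot\tfrac12\sum_{p\le x}\tfrac1p\bigr)\bigl(1+O((\log T)^{-\delta/10})\bigr)$ with $F$ analytic and $F(0)=1$, and at this point Hwang's quantitative large-deviation theorem (Lemma~\ref{Hwang}) converts the MGF asymptotic directly into the Gaussian tail asymptotic uniformly for $\Delta=o(\sqrt{\log\log T})$. The two ideas missing from your plan are (i) the restriction to $A$, which makes the passage from moments to the MGF legitimate via a truncated Taylor series, and (ii) a large-deviation principle driven by the MGF rather than by moment matching. Either of your proposed routes (characteristic-function control, or tilting the independent model and transferring) would, if made precise, essentially have to reconstruct these two ingredients; as written they are not a proof.
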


Previously a result such as Proposition \ref{Proposition2} was known only for
fairly short Dirichlet polynomials (i.e $x \leqslant (\log T)^{\theta}$) (see
\cite{LaurinchikasI} and also \cite{LaurinchikasII} for related work). The extension
in the length of the Dirichlet polynomial is responsible for our improvement 
in Theorem \ref{Theorem1}.

The key idea in 
our proof of Proposition \ref{Proposition2}
is to work on a subset $A$ of $[T;2T]$
on which the Dirichlet polynomial does not attain large values. We explain the idea 
briefly in more details in subsection (\ref{explanation}) below. 

Proposition \ref{Proposition2} can be extended to the range $\Delta
\asymp \sqrt {\log\log T}$.  In that situation we obtain the following result.
\begin{proposition}
\label{Proposition3}
Let $x = T^{1/ (\log\log T)^2}$ and $k > 0$. Uniformly
in $\Delta \sim k \sqrt{\tfrac 12 \log\log T}$,
   \begin{equation*}
    \frac{1}{T} \cdot \underset{t \in [T;2T]} {\text{meas}} 
    \left\{ \Re
    \sum_{p \leqslant x} \frac{1}{p^{\tfrac 12 + \mathi t}} \geqslant \Delta
    \cdot \sqrt{\tfrac 12 \sum_{p \leqslant x} \frac{1}{p}} \right\} \sim
    c_k \int_{\Delta}^{\infty} e^{- u^2 / 2} \cdot \frac{\mathd u}{\sqrt{2
    \pi}} 
  \end{equation*}
with $0 < c_k \neq 1$ a constant, depending only on $k$.  
\end{proposition}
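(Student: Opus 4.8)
The plan is to run the exponential-moment method of large deviations, but, following the proof of Proposition~\ref{Proposition2}, only on a good set $A\subseteq[T;2T]$ on which $P(t):=\Re\sum_{p\le x}p^{-1/2-\mathi t}$ stays moderate, so that every exponential moment we use can be expanded into moments of $P$ of order at most a fixed power of $\log\log T$, which is precisely what the short Dirichlet polynomial mean value estimates supply. Put $\sigma^2:=\tfrac12\sum_{p\le x}\tfrac1p$, so $\sigma\sim\sqrt{\tfrac12\log\log T}$ and the threshold is $\Delta\sigma$ with $\Delta/\sigma\to k$. The probabilistic model for $P$ is $Q:=\sum_{p\le x}p^{-1/2}\cos\theta_p$ with $(\theta_p)$ independent and uniform on $[0,2\pi)$, and the essential computation is
\begin{equation*}
  \Phi(w):=\prod_{p\le x}I_0\!\left(\frac{w}{\sqrt p}\right)=\exp\!\left(\frac{w^2\sigma^2}{2}+\sum_{p\le x}\Bigl(\log I_0(\tfrac{w}{\sqrt p})-\tfrac{w^2}{4p}\Bigr)\right)=\exp\!\Bigl(\frac{w^2\sigma^2}{2}+G(w)+o(1)\Bigr),
\end{equation*}
where $G(w):=\sum_{p}\bigl(\log I_0(w/\sqrt p)-\tfrac{w^2}{4p}\bigr)$ converges, since $\log I_0(z)-\tfrac{z^2}{4}=-\tfrac{z^4}{64}+O(z^6)$. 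In the range $\Delta=o(\sqrt{\log\log T})$ of Proposition~\ref{Proposition2} the relevant tilt tends to $0$ and $G(w)\to0$, which is why the constant there is $1$; in the present range the relevant tilt tends to $k>0$, and the non-Gaussian correction $G(k)<0$ --- which is $<0$ because $I_0(z)<e^{z^2/4}$ for $z>0$, the inequality $(m!)^2>m!$ for $m\ge2$ --- survives and produces
\begin{equation*}
  c_k=e^{G(k)}=\prod_{p}\Bigl(I_0\!\left(\tfrac{k}{\sqrt p}\right)e^{-k^2/(4p)}\Bigr)\in(0,1),\qquad c_k\longrightarrow1\ \text{ as }k\to0,
\end{equation*}
consistently with Proposition~\ref{Proposition2}.

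First I would fix a good set, say $A=\{t\in[T;2T]:|P(t)|\le(\log\log T)^{1+\delta}\}$ for a small fixed $\delta\in(0,\tfrac12)$, and prove that uniformly for $\Re w$ in a fixed neighbourhood of $k$ and $|\Im w|\le(\log\log T)^{1/2}$,
\begin{equation*}
  \frac1T\int_A e^{wP(t)}\,\mathd t=\Phi(w)\bigl(1+o(1)\bigr).
\end{equation*}
This uses only the moment input already available for Proposition~\ref{Proposition2}: write $e^{wP}=\sum_{j\le J}\tfrac{w^j}{j!}P^j+(\text{remainder})$ with $J=(\log\log T)^{1+2\delta}$; the Taylor remainder on $A$ is $\ll\bigl(e|w|(\log\log T)^{1+\delta}/J\bigr)^{J}e^{|w|(\log\log T)^{1+\delta}}$, which is super-polynomially small; each moment $\tfrac1T\int_{[T;2T]}P^j\,\mathd t$ equals $\mathbb E[Q^j]$ up to $O(T^{-1/2})$ since $P^j$ is a Dirichlet polynomial of length $x^J=T^{o(1)}$; the contribution of $A^{c}$ is at most $\bigl(\tfrac1T\,\text{meas}(A^{c})\bigr)^{1/2}$ times a fixed power of $\log T$, and Markov's inequality applied to the even moments of $P$ gives $\tfrac1T\,\text{meas}(A^{c})\ll\exp(-c(\log\log T)^{1+2\delta})$, negligible against everything below; and $\Phi(w)-\sum_{j\le J}\tfrac{w^j}{j!}\mathbb E[Q^j]$ is negligible by the same even-moment bound. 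Combining with the evaluation of $\Phi$ gives $\tfrac1T\int_A e^{wP(t)}\,\mathd t=\exp(\tfrac{w^2\sigma^2}{2}+G(w)+o(1))$ on that region.

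Next I would pass from these exponential moments to the tail. Since $\text{meas}(A^{c})$ is negligible it suffices to estimate $\tfrac1T\,\text{meas}\{t\in A:P(t)>\Delta\sigma\}$, which by Laplace inversion equals $\tfrac1{2\pi\mathi}\int_{(\lambda_0)}\tfrac{e^{-w\Delta\sigma}}{w}\bigl(\tfrac1T\int_A e^{wP}\,\mathd t\bigr)\,\mathd w$, with the contour through the saddle point $\lambda_0$ of $\tfrac{w^2\sigma^2}{2}+G(w)-w\Delta\sigma$, i.e.\ $\lambda_0\sigma^2+G'(\lambda_0)=\Delta\sigma$; since $G'=O(1)$ and $\sigma^2\to\infty$ this forces $\lambda_0=\Delta/\sigma+O(\sigma^{-2})\to k$. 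On the vertical line $w=\lambda_0+\mathi s$ the real part of $\tfrac{w^2\sigma^2}{2}+G(w)-w\Delta\sigma$ equals $-\tfrac{\Delta^2}{2}+G(\lambda_0)-\tfrac{\sigma^2 s^2}{2}+O(s^2)+o(1)$ while its imaginary part vanishes to first order in $s$ (by the saddle equation together with $G(\lambda_0+\mathi s)=G(\lambda_0)+\mathi sG'(\lambda_0)+O(s^2)$), so the $s$-integral is a Gaussian of width $\asymp\sigma^{-1}$ contributing $\tfrac1{\lambda_0}\cdot\tfrac{\sqrt{2\pi}}{\sigma}$, whence
\begin{equation*}
  \frac1T\,\text{meas}\{t\in A:P(t)>\Delta\sigma\}\sim\frac{e^{G(\lambda_0)}}{\lambda_0\sigma\sqrt{2\pi}}\,e^{-\Delta^2/2}.
\end{equation*}
Since $\lambda_0\to k$, $G(\lambda_0)\to G(k)$ and $\lambda_0\sigma/\Delta\to1$, the right-hand side is $\sim e^{G(k)}\cdot\tfrac1{\Delta\sqrt{2\pi}}e^{-\Delta^2/2}\sim c_k\int_\Delta^\infty e^{-u^2/2}\tfrac{\mathd u}{\sqrt{2\pi}}$, which is the assertion of the Proposition.

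The step I expect to be the main obstacle is the inversion. Our formula for $\tfrac1T\int_A e^{wP}\,\mathd t$ holds only for $|\Im w|$ up to a slowly growing power of $\log\log T$, whereas the integrand $\tfrac{e^{-w\Delta\sigma}}{w}\Phi(w)$ decays in $\Im w$ only like $|\Im w|^{-1}$, so the Laplace inversion integral is merely conditionally convergent and its tail in $\Im w$ is not controlled for free. I would get around this by replacing $\mathbf 1_{[\Delta\sigma,\infty)}$ with a Beurling--Selberg or Fej\'er-type majorant and minorant of width $\asymp\sigma^{-1}$, whose Fourier transforms decay rapidly and thereby confine the inversion integral to $|\Im w|\ll(\log\log T)^{\varepsilon}$, well inside the region where $\tfrac1T\int_A e^{wP}\,\mathd t=\Phi(w)(1+o(1))$, and then removing the smoothing at the end; one also needs the complementary bound $|\Phi(w)|\ll\Phi(\lambda_0)\exp(-c(\Im w)^2\log\log T)$ for $w$ in that range, so that the contour genuinely localises at the saddle. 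Equivalently the step is an exponential change of measure $\mathd\nu_{\lambda_0}(t)\propto e^{\lambda_0 P(t)}\mathbf 1_A(t)\,\mathd t$ followed by a local central limit theorem for $P$ under $\nu_{\lambda_0}$, whose mean is exactly $\Delta\sigma$ and whose variance is $\sigma^2+G''(\lambda_0)\sim\sigma^2$; the delicate part throughout is the bookkeeping that keeps every exponential moment we invoke inside the order of moments delivered by the short Dirichlet polynomial mean value theorems.
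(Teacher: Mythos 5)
Your proposal is correct and follows essentially the same route as the paper: restrict to a good set $A$ on which $P(t)$ is $O(\log\log T)$, compute the moment generating function $\frac1T\int_A e^{wP}\,\mathd t$ by expanding in moments of length $O(\log\log T)$ (which stay within the range covered by the Dirichlet-polynomial mean-value estimates), identify the limiting product $\prod_p I_0(w/\sqrt p)$, and then run a saddle-point inversion at the tilt $\lambda_0\sim k$. Your constant $c_k=e^{G(k)}=\prod_p I_0(k/\sqrt p)e^{-k^2/(4p)}$ is precisely what the paper obtains as $F(k)$, and your argument via $I_0(z)<e^{z^2/4}$ correctly shows $c_k\in(0,1)$, refining the paper's bare assertion $c_k\neq1$.

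The one place where you take a slightly different technical route is the inversion. You correctly flag that the naive Laplace kernel $e^{-w\Delta\sigma}/w$ decays only like $|\Im w|^{-1}$, which is incompatible with the limited range $|\Im w|\ll(\log\log T)^{1/2}$ over which the moment-generating-function approximation is established, and you propose to cure this by smoothing the indicator with a Beurling--Selberg majorant/minorant of width $\asymp\sigma^{-1}$. The paper instead invokes Tenenbaum's variant of Perron's formula, which replaces $1/s$ by the kernel $T/(s(s+T))$ (with $T=x^{1/16}$) and thereby has built-in polynomial decay, combined with the same bound $\prod_{p\le x}I_0(z/\sqrt p)\ll\exp(-c(\Im z)^2)$ in the strip $4C^2\le|\Im z|^2\le x^{1/8}$ that you mention as the ``complementary bound.'' Both devices do the same job of localising the contour to $|\Im w|\ll\psi/\sigma$; Tenenbaum's formula gives an exact identity and is marginally cleaner, while your smoothing approach would need the extra removal-of-smoothing step at the end, but it is not harder. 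Your alternative reformulation as a tilted local CLT under $\mathd\nu_{\lambda_0}\propto e^{\lambda_0 P}\mathbf 1_A\,\mathd t$ is a correct abstraction of the same computation.
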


It is thus
apparent that (\ref{asympt2}) does not persist for $\Delta \geqslant
\varepsilon \sqrt{\log\log T}$. Similarly,
\begin{equation}
  \frac{1}{T} \cdot \underset{t \in [T;2T]} {\text{meas}} 
  \left\{
  \frac{\log | \zeta ( \tfrac 12 + \mathi t) |}{\sqrt{
      \tfrac 12 \log\log T}} \geqslant \Delta \right\} \sim
  \int_{\Delta}^{\infty} e^{- u^2 / 2} \cdot \frac{\mathd u}{\sqrt{2
  \pi}} \label{asympt}
\end{equation}
cannot be true for $\Delta \geqslant \varepsilon
\sqrt{\log\log T}$ with $\varepsilon > 0$ small.
Indeed, one can show (on the Riemann Hypothesis) that this would 
contradict the moment conjectures,
\begin{equation}
  \int_T^{2 T} | \zeta (\tfrac 12 + \mathrm{i} t) |^{2 k} \mathrm{d} t \sim C_k
  \cdot T (\log T)^{k^2} \label{moment_conjecture2}
\end{equation}
$(T \rightarrow \infty)$ since it is conjectured that $C_k \neq 1$ for $k \in (0, 1)$ 
and if (\ref{asympt}) holds for $\Delta \leq \varepsilon \sqrt{\log\log T}$ then
$C_k = 1$ for $k \leq \varepsilon$ (the Riemann Hypothesis is used because we
appeal to \cite{SoundM} to first restrict the range of integration in (\ref{moment_conjecture2}) to those $t$'s at which $\log |\zeta(\tfrac 12 + \mathi t)|
\sim k \log\log T$).  
Nonetheless, by analogy to Proposition \ref{Proposition2} we expect
(\ref{asympt}) to hold for all smaller $\Delta$.

\begin{conjecture}
  \label{Conjecture1}If $\Delta = o (
  \sqrt{\log\log T})$ then (\ref{asympt}) holds. 
\end{conjecture}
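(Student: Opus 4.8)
\textit{A possible line of attack.} The plan is to run the same two moves as in the proof of Theorem~\ref{Theorem1} --- first replace $\log|\zeta(\tfrac12+\mathi t)|$ by a Dirichlet polynomial over primes, then invoke a distributional statement for that polynomial --- but to carry the first move out over the whole conjectural range. The second move is, in effect, already done: Proposition~\ref{Proposition2} gives the Gaussian asymptotic for $\Re\sum_{p\le x}p^{-1/2-\mathi t}$ uniformly in $\Delta = o(\sqrt{\log\log T})$, and its proof --- in particular the idea of working on a subset of $[T;2T]$ on which the polynomial is not too large --- should adapt to the slightly different prime cutoffs that appear below. Thus Conjecture~\ref{Conjecture1} reduces to a \emph{comparison estimate}: for some $\eta(T)\to 0$ and uniformly in $\Delta = o(\sqrt{\log\log T})$,
\[
\frac1T \underset{t\in[T;2T]}{\text{meas}}\Bigl\{\bigl|\log|\zeta(\tfrac12+\mathi t)| - \Re\,{\textstyle\sum_{p\le x}}\,p^{-1/2-\mathi t}\bigr| \ge \eta(T)\,\Delta\sqrt{\tfrac12\log\log T}\Bigr\} = o\!\Bigl(\int_\Delta^\infty e^{-u^2/2}\,\frac{\mathd u}{\sqrt{2\pi}}\Bigr),
\]
since chaining this with Proposition~\ref{Proposition2} and the uniform continuity of $\Delta\mapsto\int_\Delta^\infty e^{-u^2/2}\,\mathd u$ yields (\ref{asympt}).

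To attack the comparison estimate I would start from Selberg's explicit-formula identity (as used by Tsang \cite{Tsang}), which writes, for $t$ away from a thin set, the difference $\log|\zeta(\tfrac12+\mathi t)| - \Re\sum_{p\le x}p^{-1/2-\mathi t}$ as a sum of three pieces: (a) the prime-power terms $\tfrac12\Re\sum_p p^{-1-2\mathi t}+\cdots$, a Dirichlet polynomial of bounded variance; (b) the primes and prime powers with $x<n\le x^{3}$, together with the corrections produced by Selberg's mollifier weights and by evaluating slightly to the right of $\tfrac12$ --- again a short Dirichlet polynomial of variance $O(1)$; and (c) a term controlled by the zeros $\rho=\tfrac12+\mathi\gamma$ of $\zeta$ within $O(1/\log x)$ of the height $t$, plus a smooth $S$-type remainder. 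Pieces (a) and (b) are Dirichlet polynomials of length $T^{o(1)}$ and variance $O(1)$, hence have $2k$-th moments $\ll (Ck)^{k}$ for every $k$ up to a fixed power of $\log\log T$; since $\Delta^{2}=o(\log\log T)$, Markov's inequality with $k\asymp\Delta^{2}$ bounds their contribution to the exceptional set by $O(T e^{-\Delta^{2}})$, which is negligible next to the Gaussian tail. Everything therefore hinges on piece (c).

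The hard part --- and the reason this is only a conjecture --- is piece (c): one must show that the set of $t\in[T;2T]$ on which the zeros near height $t$ drag $\log|\zeta(\tfrac12+\mathi t)|$ away from its Euler-product model by more than $\eta(T)\Delta\sqrt{\log\log T}$ has measure $o\bigl(T e^{-\Delta^{2}/2}/\Delta\bigr)$. When $\Delta$ is a fixed power of $\log\log T$ this target is a negative power of $\log T$, and controlling an abnormally dense cluster of zeros to that precision --- \emph{both} to rule out large values of $\log|\zeta|$ not accounted for by the Euler product, \emph{and}, for the lower half of the comparison, to guarantee that $\zeta$ is genuinely not small on almost all of the set where the Euler product is large --- is beyond what zero-density estimates, pair correlation, or even the Riemann Hypothesis currently provide. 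It is of the same order of difficulty as obtaining sharp upper \emph{and} lower bounds, uniformly in the exponent, for the fractional moments $\int_T^{2T}|\zeta(\tfrac12+\mathi t)|^{2k}\,\mathd t$ with $k=\Delta/\sqrt{2\log\log T}\to 0$, together with a genuine local-limit sharpening of Selberg's theorem: on the Riemann Hypothesis one can extract the upper bound of the correct order from Soundararajan's argument \cite{SoundM} (and later refinements of it), but neither the sharp constant nor the matching lower bound. This is exactly where the proof of Theorem~\ref{Theorem1} halts --- the exponent $1/10$ there reflects the range of $k$ for which the required high-moment bounds for $\log|\zeta|$ and for its approximating Dirichlet polynomials are presently known --- and removing the restriction, hence proving Conjecture~\ref{Conjecture1}, appears to need genuinely new information about the vertical distribution of the zeros of $\zeta$.
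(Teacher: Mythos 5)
The statement under review is a \emph{conjecture}; the paper offers no proof, only the heuristic ``by analogy to Proposition~\ref{Proposition2} we expect (\ref{asympt}) to hold for all smaller $\Delta$,'' together with the observation in Remark~(5) that the exponent $1/10$ in Theorem~\ref{Theorem1} is what the $k^{4k}$ in Lemma~\ref{Selberg_lemma} allows. You correctly recognize that there is nothing to prove here, and your reduction --- Proposition~\ref{Proposition2} plus a uniform comparison estimate for $\log|\zeta(\tfrac12+\mathi t)|-\Re\sum_{p\le x}p^{-1/2-\mathi t}$ --- is precisely the skeleton of the proof of Theorem~\ref{Theorem1}; your identification of the zeros term as the bottleneck is also accurate and matches the paper's discussion.

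Two refinements are worth making. First, the ``uniform continuity'' you invoke in chaining the comparison estimate with Proposition~\ref{Proposition2} needs to be quantitative at the scale of the Gaussian tail: shifting $\Delta$ by $\eta\Delta$ changes $\int_\Delta^\infty e^{-u^2/2}\,\mathd u$ by a factor $\approx e^{\eta\Delta^2}$, so one needs $\eta(T)\Delta^2\to 0$, not merely $\eta(T)\to 0$. Second, and more importantly, your strategy --- bound the difference on an exceptional set and union-bound --- has an \emph{intrinsic} ceiling that falls well short of the conjectured range, even under the most optimistic version of Lemma~\ref{Selberg_lemma}. The difference has variance $\asymp\log_3 T$; if one could prove it has genuine Gaussian tails (i.e.\ the $k^{4k}$ improved all the way to $k^k$), then writing the exceptional threshold as $\mathcal L$, the two constraints $\mathcal L\gg\Delta\sqrt{\log_3 T}$ (to make the bad set smaller than the Gaussian tail) and $\Delta\mathcal L = o(\sqrt{\log\log T})$ (to keep the Gaussian integral stable) force $\Delta = o\bigl((\log\log T)^{1/4}(\log_3 T)^{-1/4}\bigr)$. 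This is consistent with the paper's pattern $k^{4k}\mapsto 1/10$, $k^{2k}\mapsto 1/6$, $k^{k}\mapsto 1/4$. So even ideal moment estimates for the remainder do not reach $o(\sqrt{\log\log T})$ by truncate-and-compare. A proof of Conjecture~\ref{Conjecture1} would therefore seem to need a genuinely \emph{joint} treatment of the Dirichlet polynomial and the remainder --- heuristically, two nearly independent Gaussians of variances $\tfrac12\log\log T$ and $O(\log_3 T)$ whose sum still has the right tail up to $\Delta = o(\sqrt{\log\log T/\log_3 T})$ --- rather than a pointwise triangle inequality. This sharpens your final paragraph: the obstruction is not merely that sharp bounds for pieces (a)--(c) are unknown, but that even granting them, the architecture of the argument would have to change.
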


For larger values of $\Delta$ -- in analogy to Proposition
\ref{Proposition3} -- we conjecture that (\ref{asympt}) deviates from the truth
only by a ``constant multiple''. 

\begin{conjecture}
  \label{Conjecture2}Let $k > 0$. If $\Delta \sim k
  \sqrt {\log\log T}$, then
  \[ \frac{1}{T} \cdot \underset{t \in [T;2T]}{\text{meas}} 
  \left\{ \frac{\log | \zeta (\tfrac 12 + \mathi t) |}{\sqrt{
      \tfrac 12 \log\log T}} \geqslant \Delta \right\} \sim
     C_k \cdot \int_{\Delta}^{\infty} e^{- u^2 / 2} \cdot \frac{\mathd
     u}{\sqrt{2 \pi}} \]
  with $C_k$ the same constant as in (\ref{moment_conjecture2})
\end{conjecture}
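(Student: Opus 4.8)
The plan is to deduce Conjecture \ref{Conjecture2} from the moment conjecture (\ref{moment_conjecture2}), taken in a form uniform for exponents in a fixed neighbourhood of $2k$, by a Cram\'er-type change of measure. The structure is the one used to prove Proposition \ref{Proposition3} (and, behind Theorem \ref{Theorem1}, Proposition \ref{Proposition2}): one works with an exponentially tilted measure on which the relevant quantity concentrates and is asymptotically Gaussian. The only — but decisive — difference is that here the tilt must be taken by $|\zeta(\tfrac12+\mathi t)|^{2k}$ itself, whose average over $[T;2T]$ is the full moment (\ref{moment_conjecture2}) rather than an Euler product over short primes; this is why the constant that survives is $C_k$ and not merely the arithmetic constant $c_k$ of Proposition \ref{Proposition3}. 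Throughout write $L = \log\log T$ and $v = \tfrac12 L$.

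Concretely, introduce the probability measure $\mathrm{d}\nu_k(t) = |\zeta(\tfrac12+\mathi t)|^{2k}\,\mathrm{d}t\,\big/\int_T^{2T}|\zeta(\tfrac12+\mathi t)|^{2k}\,\mathrm{d}t$ on $[T;2T]$, whose normalising constant is $\sim C_k\,T(\log T)^{k^2}$ by (\ref{moment_conjecture2}). The heart of the matter is a \emph{twisted Selberg central limit theorem}: under $\nu_k$ the random variable $\log|\zeta(\tfrac12+\mathi t)|$ has mean $\sim kL$ and, once centred and divided by $\sqrt v$, is asymptotically standard Gaussian, with enough uniformity to cover the scale $\Delta\sim k\sqrt v$. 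Granting this, write the left-hand side of Conjecture \ref{Conjecture2} as $\big(\tfrac1T\int_T^{2T}|\zeta|^{2k}\,\mathrm{d}t\big)\int_{\log|\zeta|\ge\Delta\sqrt v}|\zeta|^{-2k}\,\mathrm{d}\nu_k$, insert the Gaussian approximation, and complete the square in the exponent $-2kx-(x-kL)^2/(2v)$; using $kL = 2kv$ this collapses to $-x^2/(2v)-k^2L$, so a factor $(\log T)^{-k^2}$ is produced that exactly cancels the $(\log T)^{k^2}$ from the moment, and the substitution $u = x/\sqrt v$ leaves precisely $C_k\int_\Delta^\infty e^{-u^2/2}\,\mathrm{d}u/\sqrt{2\pi}$. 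The structural point is that an exponential tilt shifts the mean of the (asymptotically Gaussian) $\log|\zeta|$ while preserving its variance $v$, so $C_k$ passes through untouched.

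To prove the twisted central limit theorem I would start from the twisted moment generating function $\tfrac1T\int_T^{2T}|\zeta(\tfrac12+\mathi t)|^{2k}e^{s\log|\zeta(\tfrac12+\mathi t)|}\,\mathrm{d}t = \tfrac1T\int_T^{2T}|\zeta(\tfrac12+\mathi t)|^{2k+s}\,\mathrm{d}t$ for $s$ in a small fixed neighbourhood of $0$; by (\ref{moment_conjecture2}) with $k$ replaced by $k+s/2$ this is $\sim C_{k+s/2}\,T(\log T)^{(k+s/2)^2}$, and since $(\log T)^{(k+s/2)^2-k^2} = \exp\big(skL+\tfrac{s^2}{4}L\big)$ and $C_{k+s/2} = C_k(1+O(s))$, the cumulant generating function of $\log|\zeta|$ under $\nu_k$ has mean $kL + O(1)$, variance $v + O(1)$, and higher cumulants $O(1) = o(L)$ — exactly a Gaussian of variance $v$ centred at $kL$ to leading order. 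Turning this moment input into a genuine uniform local/large-deviation statement is then done by splicing it with the unconditional Selberg-type reduction of $\log|\zeta|$ to short Dirichlet polynomials over primes that already underlies Proposition \ref{Proposition2}: the Dirichlet-polynomial part is a sum of essentially independent increments and is handled by the method of this paper, i.e. by restricting to a subset $A$ of $[T;2T]$ on which it is not too large, while the weight $|\zeta|^{2k}$ is absorbed through (\ref{moment_conjecture2}).

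The main obstacle is that (\ref{moment_conjecture2}) is itself open for every $k\notin\{1,2\}$, and here one needs it \emph{uniformly} for exponents near $2k$; moreover, as remarked after (\ref{moment_conjecture2}), on the Riemann Hypothesis Conjecture \ref{Conjecture2} would conversely imply the moment asymptotic for small $k$, so the two statements are essentially equivalent and this route cannot reach the full conjecture with present technology. Equivalently, the non-trivial constant $C_k = a_k\,g_k$ factorises into an arithmetic (Euler product) factor $a_k$ and a ``random matrix'' factor $g_k$ (a ratio of Barnes $G$-functions), and $g_k$ reflects the statistics of the low-lying zeros of $\zeta$ — precisely the feature that the purely arithmetic input of Propositions \ref{Proposition2}--\ref{Proposition3} only ever detects in the shape of the $a_k$-type constant $c_k$. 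For $k\in\{1,2\}$, where the moment is known ($C_1 = 1$, $C_2 = 1/(2\pi^2)$), one could attempt an unconditional proof of Conjecture \ref{Conjecture2} along these lines, the extra ingredients being a uniform shifted second- (respectively fourth-) moment estimate and a correspondingly uniform twisted central limit theorem; for general $k$ the most one can presently extract unconditionally is the matching order of magnitude — a Gaussian upper bound from the moment bounds available on the Riemann Hypothesis and a lower bound of the same order from the known $\zeta$-moment lower bounds — but not the value of $C_k$.
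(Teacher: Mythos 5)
This statement is a conjecture, and the paper offers no proof of it: the surrounding text supplies only a one-directional consistency check, an abelian argument (conditional on RH and using \cite{SoundM} to truncate tails) showing that \emph{if} the left-hand side of (\ref{asympt}) is asymptotic to some constant multiple $\kappa$ of the Gaussian tail at the scale $\Delta\sim k\sqrt{\tfrac12\log\log T}$, then necessarily $\kappa=C_k$. Your proposal runs in the converse, Tauberian direction: postulate (\ref{moment_conjecture2}) uniformly for exponents in a neighbourhood of $2k$, tilt by $\mathrm{d}\nu_k\propto|\zeta(\tfrac12+\mathi t)|^{2k}\,\mathrm{d}t$, establish a twisted central limit theorem under $\nu_k$, and undo the tilt. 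The algebra you perform is correct: writing $v=\tfrac12\log\log T$ and using $kL=2kv$, the exponent $-2kx-(x-kL)^2/(2v)$ indeed collapses to $-x^2/(2v)-k^2L$, the resulting $(\log T)^{-k^2}$ cancels the $(\log T)^{k^2}$ from the normalising moment, and what remains is $C_k\int_\Delta^\infty e^{-u^2/2}\,\mathrm{d}u/\sqrt{2\pi}$. The two directions together exhibit Conjecture \ref{Conjecture2} as essentially equivalent, on RH, to a uniform form of the moment conjecture — which is exactly the paper's point.

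You also correctly identify the two reasons this cannot be promoted to a theorem, and in fact your own discussion articulates the gap more sharply than the paper does. First, (\ref{moment_conjecture2}) is open for $k\notin\{1,2\}$, and what your tilt really needs is more than the pointwise asymptotic: you need it with relative error $o(1)$ uniformly in exponents $2k+s$ for $|s|\leq\varepsilon$, together with smoothness of $s\mapsto C_{k+s/2}$ — a strictly stronger hypothesis than the statement of (\ref{moment_conjecture2}). Second, a distributional CLT under $\nu_k$ is not sufficient to control a single tail probability at the scale $\Delta\sqrt{v}$ to within a multiplicative $1+o(1)$; you need a local or uniform large-deviation refinement, which means replaying the paper's own mechanism (restriction to a good set $A$, computation of the moment generating function, Lemma \ref{Hwang}) under the tilted measure rather than merely invoking a CLT. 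You gesture at this ``splicing'' but it is the real technical content. Your closing remark that the Dirichlet-polynomial method only ever captures the arithmetic factor of $C_k$ (the analogue of $c_k$ in Proposition \ref{Proposition3}) and is blind to the random-matrix factor is exactly the right diagnosis of why the unconditional results stop short of this conjecture.
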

An abelian argument 
(conditional on RH and using \cite{SoundM} to truncate the tails) shows that
if the left-hand side of (\ref{asympt}) is at all asymptotic to a constant
$\kappa$ times a standard Gaussian in the range
$\Delta \sim k (\tfrac 12 \log\log T)^{1/2}$ then $\int_{T}^{2T} |\zeta(\tfrac
12 + \mathi t)|^{2k} \mathd t \sim \kappa T (\log T)^{k^2}$, hence
$\kappa = C_k$. Thus there is only one reasonable choice for the constant in 
Conjecture 2 above.
\par
For negative values of $k$ Conjecture \ref{Conjecture2} is likely to be false
as soon as $k \leqslant - 1 / 2$. Indeed in that range the zeros of $\zeta
(s)$ should force a transition to an exponential distribution, see \cite{Hughes} for a precise statement.

\subsection{Remarks}

\begin{enumerate}

\item There is no difficulty in adapting our
proof to the study of negative values of $\log | \zeta (\tfrac 12 + \mathi t)
|$. Also, our argument carries over to $S (t) := \frac{1}{\pi} \Im
\log \zeta(\tfrac 12 + \mathi t)$ with little to no changes.

\item Conditionally on the Riemann Hypothesis we
obtain the better range $\Delta \leqslant
(\log\log T)^{1 / 6 - \varepsilon}$ for $S(t)$.
(assuming the Riemann Hypothesis, the analogue of Lemma \ref{Selberg_lemma}
for $S(t)$ has an $k^{2k}$ instead of $k^{4k}$. This is responsible for the
improvement).

\item Assuming the relevant Riemann Hypothesis our result extends to
elements of the Selberg class (see \cite{Selbergclass} for a
definition).

\item Finally, our method covers the case of the joint distribution
of $\log |\zeta( \tfrac 12 + \mathi t)|$ and $\Im \log \zeta(\tfrac 12 + \mathi t)$. 
Similarly, one can recover Bourgade's \cite{Bourgade} result on the joint distribution of shifts
of the Riemann zeta function in a large deviation setting. 

\item The term $k^{4k}$ in Lemma \ref{Selberg_lemma}  is directly responsible the range $\Delta \ll (\log\log T)^{1/10 - \varepsilon}$ in Theorem \ref{Theorem1}.
For example, an  improvement of the $k^{4k}$ to $k^{2k}$ in 
Lemma \ref{Selberg_lemma} 
would give Theorem \ref{Theorem1} uniformly in $\Delta \ll (\log\log T)^{1/6 
- \varepsilon}$. 

\end{enumerate}

\subsection{Outline of the proof of Proposition 1}
\label{explanation}

The key idea in the proof of Proposition
\ref{Proposition2} is to initially work on a subset $A$ of $[T ; 2 T]$ on
which the Dirichlet polynomial does not attain large values (say $\leq c \cdot
\log\log T$). This allows us to express the moment
generating function,
\begin{equation}
  \int_A \exp \left( z \cdot \Re \sum_{p \leqslant x} \frac{1}{p^{1 /
  2 + \mathi t}} \right) \cdot \mathd t \text{ \ , \ } |z| \ll 1
  \label{momentgenerating}
\end{equation}
in terms of only the first $\asymp \log\log T$
moments of $\Re \sum_{p \leqslant x} p^{- 1 / 2 - \mathi t}$ over
$t \in A$. These moments can be taken over the full interval $[T ; 2 T]$
(since $A$ is very close in measure to $T$) and then they become easy to
estimate. On adding up the contribution from the moments we obtain a very precise
estimate for (\ref{momentgenerating}). From there,
by standard probabilistic techniques, we obtain (\ref{asympt2}) with $t$ restricted
to $A$. Since $A$ is very close in measure to $T$, we get $(3)$
without the restriction to $t \in A$. 

\textbf{Acknowledgments.} The author would like to thank his supervisor K. Soundararajan
for advice and encouragements.

\textbf{Notation.} Throughout $\varepsilon$ will denote an arbitrary small
but fixed positive real. We allow $\varepsilon$ to differ from line to line.
Finally $\log_{k} T$ denotes the $k$-th iterated natural logarithm, so that
$\log_{k} \assign \log \log_{k-1} T$ and $\log_{1} T = \log T$. 

\section{Lemmata}

\begin{lemma}
  [Selberg, \cite{Tsang}] \label{Selberg_lemma} Uniformly in $k \geqslant 0$,
  \[ \int_T^{2 T} \left | \log | \zeta (\tfrac 12 + \mathrm{i} t) | -\mathfrak{R}
     \sum_{p \leqslant x} \frac{1}{p^{\tfrac 12 + \mathrm{i} t}} \right|^{2 k}
     \mathrm{d} t \ll A^k \cdot k^{4 k} +
     A^k \cdot k^k \cdot
     (\log\log\log T)^k \]
   where $x = T^{1 / (\log\log T)^{2}}$ and $A>0$ constant. 
\end{lemma}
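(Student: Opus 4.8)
The plan is to bound the $2k$-th moment of the difference $D(t) := \log|\zeta(\tfrac12+\mathrm{i}t)| - \mathfrak{R}\sum_{p\le x} p^{-1/2-\mathrm{i}t}$ by following Selberg's explicit-formula approach, but keeping careful track of the dependence on $k$. First I would recall Selberg's identity expressing $\log\zeta(\tfrac12+\mathrm{i}t)$ in terms of a sum over prime powers $p^n \le x$ weighted by $\Lambda(n)/\log n$ and a remainder controlled by the distance to the nearest zero of $\zeta$; the remainder consists of a term involving $\sum_\rho$ over zeros close to $\tfrac12+\mathrm{i}t$ and a bounded ``arithmetic'' term. After subtracting the prime sum $\mathfrak{R}\sum_{p\le x}p^{-1/2-\mathrm{i}t}$, what survives in $D(t)$ is (i) the contribution of prime squares $\mathfrak{R}\sum_{p^2\le x} \tfrac12 p^{-1-\mathrm{i}t}$ and higher prime powers, which is $O(1)$ pointwise, (ii) a short smoothed prime sum of length $x$ with the Selberg weight $\log(x/n)/\log x$ minus the sharp cutoff, and (iii) the zero-term, which is the genuinely large and fluctuating piece.

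The main work is then to take $2k$-th moments. For the prime-sum pieces I would expand the $2k$-th power, use mean-value estimates for Dirichlet polynomials over $[T,2T]$ (the diagonal terms dominate since $x = T^{1/(\log\log T)^2}$ is a small power of $T$, so the off-diagonal is negligible for $k$ up to roughly $\log\log T$), and bound the resulting combinatorial sum; because the variance of such a short prime sum is $O(\log\log\log T)$ (the primes between $x^{1/(\log x)}$-type ranges contribute $\sum \tfrac1p \asymp \log\log\log T$), Gaussian moment bounds give a contribution of size $\ll A^k\,k^k\,(\log\log\log T)^k$, which is the second term in the claimed bound. For the zero-term I would use the standard fact (again going back to Selberg) that after integration against a suitable kernel it is controlled by $\log|\zeta|$-type quantities on a nearby vertical segment, or more directly by the count of zeros in short intervals; bounding its $2k$-th moment crudely via $N(t+h)-N(t-h) \ll \log t$ and summing, or via Soundararajan-type resonance-free estimates, produces a contribution of size $\ll A^k\,k^{4k}$ — the exponent $4k$ arising because one must pay $k^{2k}$ twice, once from a Hölder/Cauchy–Schwarz split separating the zero-count from the other factors, and once from the moment of the zero-count itself.

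Concretely I would: (1) write $D(t) = D_1(t)+D_2(t)$ with $D_1$ the short prime (and prime-power) sum and $D_2$ the zero-term; (2) apply $|a+b|^{2k} \ll 4^k(|a|^{2k}+|b|^{2k})$, absorbing the $4^k$ into $A^k$; (3) estimate $\int_T^{2T}|D_1(t)|^{2k}\,\mathrm{d}t$ by the Dirichlet-polynomial mean value and the combinatorics of partitions, getting $\ll A^k k^k(\log_3 T)^k$; (4) estimate $\int_T^{2T}|D_2(t)|^{2k}\,\mathrm{d}t \ll A^k k^{4k}$ using Selberg's bounds on the zero-contribution and crude zero-counting, this being where the $k^{4k}$ enters. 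The main obstacle is step (4): getting the zero-term moment down to $A^k k^{4k}$ uniformly in $k \le \log\log T$ requires the full strength of Selberg's treatment of $S(t)$ and careful bookkeeping of how many zeros can cluster near $\tfrac12+\mathrm{i}t$; a naïve bound loses more powers of $k$, while the improvement to $k^{2k}$ mentioned in the remarks would need the Riemann Hypothesis to control the zero-term by a genuine Dirichlet polynomial rather than by a zero count. Everything else is routine once the decomposition is in place.
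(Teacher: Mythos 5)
The paper itself supplies no proof for this lemma: the proof is simply a citation to page~60 of Tsang's thesis. So there is nothing in the paper to compare against line by line, and what you have written can only be measured against the Selberg--Tsang argument it alludes to. At the level of strategy your sketch is aligned with that argument: one starts from Selberg's explicit formula for $\log\zeta(\tfrac12+\mathi t)$, splits the difference $D(t)$ into a Dirichlet-polynomial piece (prime powers, the smoothed-minus-sharp weight mismatch, and the $O(1)$ arithmetic term) and a zero-term piece, uses $|a+b|^{2k}\ll 4^k(|a|^{2k}+|b|^{2k})$ to separate them, and estimates each moment. The attribution of the $(\log_3 T)^k$ factor to the variance of the residual prime sum is essentially right (the residual sum has $\sum 1/p\asymp\log_3 T$ with $x=T^{1/(\log_2 T)^2}$), and $E|N(0,\sigma^2)|^{2k}\asymp k^k\sigma^{2k}$ then gives $A^k k^k(\log_3 T)^k$.

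The gap is exactly where you flag it, in your step~(4). You assert $\int_T^{2T}|D_2(t)|^{2k}\,\mathd t\ll A^k k^{4k}$ for the zero term and offer the explanation that $k^{2k}$ is ``paid twice, once from a H\"older/Cauchy--Schwarz split and once from the moment of the zero-count itself,'' but this is a heuristic guess rather than an argument, and it is not obviously how the loss actually arises in Tsang's proof. The unconditional treatment of the zero term is the entire technical content of the lemma: it requires Selberg's machinery relating $\log|\zeta|$ near the line to a weighted count of zeros in a box, a careful choice of the mollifying parameter, and moment bounds for that weighted zero-count. Without a concrete estimate here (not just a plausibility story about where two factors of $k^{2k}$ might come from) the proposal does not establish the lemma; it reduces it to the one inequality that is genuinely hard. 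Since the paper's remarks make clear that the exponent $4k$ in this term is precisely what limits Theorem~\ref{Theorem1} to $\Delta\ll(\log_2 T)^{1/10-\varepsilon}$, this is not a detail that can be waved through: you would need to either reproduce Tsang's zero-term moment bound or give a genuinely different argument for it.
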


\begin{proof}
See Tsang's thesis \cite{Tsang}, page 60. 
\end{proof}

\begin{lemma}
  [Hwang, \cite{Hwang}] \label{Hwang} Let $W_n$ be a sequence of distribution functions such
that for $|s| \leqslant \varepsilon$, 
  \[ \int_{- \infty}^{\infty} e^{st} \mathrm{d} W_n (t) = F(s) \exp \left(
     \frac{s^2}{2} \cdot \phi (n) \right) \cdot \left( 1 + O \left(
     \kappa_n^{- 1} \right) \right) \]
     with $F(s)$ a function analytic around $s=0$ and equal to $1$ at $s=0$. 
Then, uniformly in $\Delta \leqslant o (\min (\kappa_n, \sqrt{\phi (n)})$,
  \[ 1 - W_n (\Delta \sqrt{\phi(n)}) \sim \int_{\Delta}^{\infty} e^{- u^2 / 2} \cdot
     \frac{\mathrm{d} u}{\sqrt{2 \pi}} \]
\end{lemma}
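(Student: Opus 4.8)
The plan is to prove this by a standard saddle-point / tilting (Cramér-type) argument, following the classical scheme for moderate deviations from a quasi-power moment generating function. First I would pass to the tilted measure: for a parameter $s \in [0,\varepsilon]$ to be chosen, consider the characteristic-function side. By Markov's inequality applied after exponentiating, one has the crude bound
\[
1 - W_n(\Delta \sqrt{\phi(n)}) \leqslant e^{-s \Delta \sqrt{\phi(n)}} \int_{-\infty}^{\infty} e^{st}\, \mathrm{d} W_n(t) = F(s)\exp\left( \tfrac{s^2}{2}\phi(n) - s\Delta\sqrt{\phi(n)} \right)\bigl(1 + O(\kappa_n^{-1})\bigr),
\]
and optimizing in $s$ suggests the choice $s = \Delta/\sqrt{\phi(n)}$, which lies in $[0,\varepsilon]$ precisely because $\Delta = o(\sqrt{\phi(n)})$. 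This already gives the correct upper bound $e^{-\Delta^2/2}(1+o(1))$ up to the bounded factor $F(s) = 1 + O(s) = 1 + o(1)$; but to get the sharp asymptotic (with the full Gaussian tail integral, not just its leading exponential) and the matching lower bound, I need the local/Fourier version.

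The cleaner route is to introduce the tilted distribution functions $\widetilde{W}_n$ defined by $\mathrm{d}\widetilde{W}_n(t) = e^{s t}\,\mathrm{d} W_n(t) \big/ \int e^{su}\,\mathrm{d} W_n(u)$ with $s = \Delta/\sqrt{\phi(n)}$, and then to analyze $1 - W_n(\Delta\sqrt{\phi(n)})$ by undoing the tilt:
\[
1 - W_n\bigl(\Delta\sqrt{\phi(n)}\bigr) = \left( \int e^{su}\,\mathrm{d} W_n(u) \right) \int_{\Delta\sqrt{\phi(n)}}^{\infty} e^{-st}\,\mathrm{d}\widetilde{W}_n(t).
\]
The first factor is $F(s)\exp(\tfrac{s^2}{2}\phi(n))(1+O(\kappa_n^{-1}))$. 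For the second factor one checks that under $\widetilde{W}_n$ the mean is $s\phi(n) + O(1) = \Delta\sqrt{\phi(n)} + O(1)$ and the variance is $\phi(n)(1+o(1))$, so by a local central limit theorem (obtainable from the stated MGF hypothesis via Fourier inversion on $|s| \leqslant \varepsilon$, extended to a complex strip since $F$ is analytic at $0$) the tilted measure near its mean looks Gaussian with that variance; the integral $\int_{\Delta\sqrt{\phi(n)}}^\infty e^{-st}\,\mathrm{d}\widetilde{W}_n(t)$ then evaluates to $\exp(-s\Delta\sqrt{\phi(n)})\cdot \frac{1}{s\sqrt{2\pi\phi(n)}}(1+o(1))$. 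Multiplying out, the $F(s)$ and $\exp$ terms combine to $\exp(-\Delta^2/2)$ times $1/(s\sqrt{2\pi\phi(n)}) = 1/(\Delta\sqrt{2\pi})$, and finally $\frac{1}{\Delta\sqrt{2\pi}}e^{-\Delta^2/2} \sim \int_\Delta^\infty e^{-u^2/2}\,\frac{\mathrm{d}u}{\sqrt{2\pi}}$ by the standard Mills-ratio asymptotic, valid since $\Delta \to \infty$ (the case $\Delta$ bounded being the classical CLT statement, handled directly by Fourier inversion). The condition $\Delta = o(\kappa_n)$ is exactly what is needed to absorb the $O(\kappa_n^{-1})$ error against the main term after the tilt has inflated everything by $e^{\Delta^2/2}$.

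The main obstacle, and the step requiring the most care, is making the local limit theorem for the tilted measures uniform in $n$ and honest about error terms: the hypothesis gives control of $\int e^{st}\,\mathrm{d} W_n$ only for real $|s| \leqslant \varepsilon$, so to run Fourier inversion I must either invoke analyticity of $F$ to extend to a complex neighbourhood $|s| \leqslant \varepsilon$ in $\mathbb{C}$, or work directly with the real-variable tilt and a Berry–Esseen-type smoothing, checking that the relative error stays $o(1)$ throughout the regime $\Delta \leqslant o(\min(\kappa_n,\sqrt{\phi(n)}))$ after the exponential factor $e^{\Delta^2/2}$ is restored. I would carry this out by a smoothing inequality (Esseen's lemma) applied to $\widetilde{W}_n$ recentered and rescaled, controlling the characteristic function of the tilt on a window of length $\asymp 1$ around the origin from the hypothesis and trivially bounding it outside — the uniformity is the only genuinely delicate point, everything else being bookkeeping.
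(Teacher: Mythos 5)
The paper does not prove this lemma at all; it is stated as a black box, with the one-line justification ``This is a special case of Theorem 1 in \cite{Hwang}.'' So there is no in-paper argument to compare against. What you have written is a sketch of the standard exponential-tilting / saddle-point argument that underlies Hwang's quasi-powers theorem, and it is the right argument in outline --- Hwang's own proof proceeds by essentially this route (tilt by $s=\Delta/\sqrt{\phi(n)}$, expand the tilted measure near its mean, undo the tilt, compare to the Mills ratio, and absorb the $O(\kappa_n^{-1})$ against the inflation factor $e^{\Delta^2/2}$, which is exactly what the condition $\Delta=o(\min(\kappa_n,\sqrt{\phi(n)}))$ is for).

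Two places where the sketch, taken literally, would not survive scrutiny. First, the sentence claiming a ``local central limit theorem\ldots obtainable from the stated MGF hypothesis via Fourier inversion on $|s|\leqslant\varepsilon$'' asserts too much: control of the Laplace transform on a fixed complex disc gives you the characteristic function of the recentred, rescaled tilt only in a window $|\theta|\lesssim\varepsilon\sqrt{\phi(n)}$, which is nowhere near enough to invert for a density. What you can extract --- and what suffices, since the quantity you actually integrate against $e^{-st}$ is $1-\widetilde W_n(t)$ after an integration by parts, not a density --- is a Berry--Esseen bound on the distribution function, via Esseen's smoothing inequality with cutoff $T\asymp\sqrt{\phi(n)}$; this yields an error $O(\phi(n)^{-1/2}+\kappa_n^{-1})=o(1/\Delta)$, which is what one needs. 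You in fact identify exactly this repair in your final paragraph, so the ``local CLT'' phrasing is an overstatement that you then walk back; the proof really lives in that last paragraph. Second, the remark about ``extending to a complex strip since $F$ is analytic at $0$'' conflates two things: analyticity of $F$ does not, by itself, propagate the $O(\kappa_n^{-1})$ error off the real axis. The right reading is that $|s|\leqslant\varepsilon$ in the hypothesis is a complex disc (this is how the lemma is invoked in the paper: the MGF identity in the proof of Proposition~\ref{Proposition2} is established for complex $z$, $|z|\leqslant\varepsilon$); once that is granted, Cauchy estimates on the disc give the claimed control of the tilted mean and variance, and of the characteristic-function window, essentially for free. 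With these two points cleaned up, the decomposition $1-W_n(\Delta\sqrt{\phi(n)})=M(s)\int_{\Delta\sqrt{\phi(n)}}^{\infty}e^{-st}\,\mathrm{d}\widetilde W_n(t)$, the choice of $s$, and the bookkeeping are all correct.
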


\begin{proof}This is a special case of Theorem 1 in \cite{Hwang}.
\end{proof}

\begin{lemma} [Soundararajan, \cite{SoundM}]
  \label{SoundLemma} For $x \leqslant T^{1 / 4k}$,
  \[ \int_T^{2 T} \left| \mathfrak{R} \sum_{p \leqslant x} \frac{1}{p^{\tfrac 12 +
     \mathrm{i} t}} \right|^{2 k} \mathrm{d} t \ll k! \cdot T \left( \sum_{p
     \leqslant x} \frac{1}{p} \right)^k \]
\end{lemma}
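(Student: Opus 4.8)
\textbf{Proof proposal for Lemma \ref{SoundLemma}.}

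The plan is to expand the $2k$-th power, reduce to a sum over prime tuples, and exploit the near-orthogonality of $p^{-\mathrm{i}t}$ on $[T;2T]$. First I would write $\mathfrak{R}\,p^{-1/2-\mathrm{i}t} = \tfrac12(p^{-1/2-\mathrm{i}t}+p^{-1/2+\mathrm{i}t})$ and expand
\[
\left(\mathfrak{R}\sum_{p\leqslant x}\frac{1}{p^{1/2+\mathrm{i}t}}\right)^{2k}
= \frac{1}{2^{2k}}\sum_{\varepsilon_1,\dots,\varepsilon_{2k}\in\{\pm1\}}\ \sum_{p_1,\dots,p_{2k}\leqslant x}\frac{1}{(p_1\cdots p_{2k})^{1/2}}\cdot\frac{1}{(p_1^{\varepsilon_1}\cdots p_{2k}^{\varepsilon_{2k}})^{\mathrm{i}t}}.
\]
Integrating over $t\in[T;2T]$, the diagonal terms are those where $\prod p_j^{\varepsilon_j}=1$, i.e. the primes can be matched into pairs carrying opposite signs; these contribute the main term. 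The off-diagonal terms, where $m:=\prod p_j^{\varepsilon_j}\neq 1$, contribute $O(T/\log m)\cdot(\text{something})$ via $\int_T^{2T}m^{-\mathrm{i}t}\,\mathrm{d}t \ll \min(T,1/|\log m|)$; here the hypothesis $x\leqslant T^{1/4k}$ is exactly what makes every such $m$ satisfy $m\leqslant x^{2k}\leqslant T^{1/2}$, so $1/|\log m|\ll 1$ and the total off-diagonal contribution is $\ll x^{2k}\ll T^{1/2}$, negligible against the main term.

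Next I would evaluate the diagonal. A tuple $(p_1,\dots,p_{2k})$ with a sign pattern contributes to the diagonal precisely when the primes, counted with multiplicity, split into $k$ pairs of equal primes carrying opposite signs (with the usual care when a prime appears more than twice — but since $\sum_p p^{-1}$ dominates $\sum_p p^{-2}$ etc., those higher coincidences are lower order and can be absorbed). The number of ways to partition $2k$ labelled slots into $k$ unordered pairs is $(2k)!/(2^k k!) = (2k-1)!!$, and for each pair the choice of prime and the factor $p^{-1}$ (from $p^{-1/2}\cdot p^{-1/2}$) together with the two sign choices $\varepsilon=+,-$ for the pair gives a weight; tallying the combinatorics against the $2^{-2k}$ prefactor yields
\[
\int_T^{2T}\left(\mathfrak{R}\sum_{p\leqslant x}\frac{1}{p^{1/2+\mathrm{i}t}}\right)^{2k}\mathrm{d}t = \frac{(2k)!}{2^k k!}\cdot 2^{-k}\,T\left(\sum_{p\leqslant x}\frac1p\right)^k + (\text{error}) \ll k!\,\cdot 2\sqrt{\pi k}\cdot T\Big(\sum_{p\leqslant x}\frac1p\Big)^k,
\]
using $(2k)!/(2^{2k}k!) = \binom{2k}{k}k!/2^{2k}\ll \sqrt{k}\,k!$ by Stirling, which is absorbed into the implied constant. (Being cavalier about the constant is fine since the statement only claims $\ll$; alternatively one can just bound $(2k-1)!!\leqslant 2^k k!$ directly.)

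The main obstacle is bookkeeping the combinatorics of the diagonal cleanly, in particular handling primes that occur with multiplicity greater than two and making sure the sign constraints are tracked correctly so that one really gets $\bigl(\sum_{p\leqslant x}p^{-1}\bigr)^k$ and not some messier symmetric function; the cleanest route is to observe $\sum_{p\leqslant x}p^{-1-2\mathrm{i}t/?}$ type cross terms and higher-multiplicity contributions are each $O\bigl((\sum_p p^{-2})\cdot(\sum_p p^{-1})^{k-1}\bigr)=O\bigl((\sum_p p^{-1})^{k-1}\bigr)$, a factor $\log\log x$ smaller than the main term, hence harmless. Everything else — the off-diagonal estimate and Stirling — is routine. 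This is essentially the moment computation in Soundararajan \cite{SoundM}, so I would ultimately just cite it, but the above is the self-contained argument.
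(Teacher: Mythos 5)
Your overall strategy --- expand the $2k$-th power, integrate term-by-term, isolate the diagonal, and use $x \leqslant T^{1/4k}$ to kill the off-diagonal --- is the same as Soundararajan's argument for his Lemma~3, so the route is right. But several steps as written have genuine problems.

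First, the off-diagonal bound is stated incorrectly: the $\ll \min(T, 1/|\log m|) \ll x^{2k}$ estimate applies to a \emph{single} off-diagonal integral, and the total off-diagonal contribution also carries the weight $\bigl(\sum_{p\leqslant x}1/\sqrt{p}\bigr)^{2k}\asymp x^{k}/(\log x)^{2k}$ from the prime sums, giving $\ll x^{3k}\leqslant T^{3/4}$, not $\ll x^{2k}$. The conclusion (negligible against $T$) is still true, but only just, and this is exactly where the hypothesis $x\leqslant T^{1/4k}$ is used to its full strength, so the bookkeeping matters. Second, your Stirling comparison is in the wrong direction: $(2k)!/(2^{2k}k!) = \binom{2k}{k}k!/4^{k}\sim k!/\sqrt{\pi k}$, which is \emph{smaller} than $k!$, not $\ll k!\sqrt{k}$ as you wrote. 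Had your inequality been correct, the argument would fail, since a factor $\sqrt{k}$ cannot be absorbed into an implied constant that must be uniform in $k$ (and $k$ is taken $\asymp\log\log T$ in the application). Your fallback $(2k-1)!!\leqslant 2^{k}k!$ is the right fix and gives $(2k)!/(2^{2k}k!)\leqslant k!$ cleanly.

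The more serious gap is the treatment of higher multiplicities in the diagonal. Saying each such configuration is $O\bigl((\sum p^{-2})(\sum p^{-1})^{k-1}\bigr)$ and hence ``a factor $\log\log x$ smaller'' is not a uniform-in-$k$ argument: the number of multiplicity patterns grows with $k$, so pointwise smallness does not by itself bound the total. You need to sum the full diagonal exactly. The cleanest route is the Euler product identity that the paper already records in Lemma~\ref{Lemma2}: the diagonal equals $(2k)!\,[z^{2k}]\prod_{p\leqslant x}I_{0}(z/\sqrt{p})$, and since $I_{0}(w)$ has nonnegative Taylor coefficients dominated term-by-term by those of $e^{w^{2}/4}$, one gets the diagonal $\leqslant (2k)!\cdot(S/4)^{k}/k! = \binom{2k}{k}4^{-k}\,k!\,S^{k}\leqslant k!\,S^{k}$ with $S=\sum_{p\leqslant x}1/p$, all multiplicities included. (Soundararajan instead works with $\bigl|\sum a(p)p^{-it}\bigr|^{2k}$ so the diagonal becomes $\sum_{\Omega(n)=k}|b(n)|^{2}$ with $b(n)=\sum_{p_{1}\cdots p_{k}=n}a(p_{1})\cdots a(p_{k})$, and the bound $|b(n)|^{2}\leqslant k!\cdot\frac{k!}{\alpha_{1}!\cdots\alpha_{r}!}\prod|a(q_{i})|^{2\alpha_{i}}$ follows from $1/\alpha!\leqslant 1$; this avoids the sign bookkeeping that working with $\mathfrak{R}$ forces on you.) Either of these would close the gap.
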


\begin{proof} This is a special case of Lemma 3 in \cite{SoundM} \end{proof}. 

\section{ Proof of Proposition 1 }

\begin{lemma} \label{Lemma1}
Let $p_1 , \ldots, p_k \leq x$ be primes. Then,
$$
\frac{1}{T} \int_{T}^{2T} \prod_{\ell = 1}^{k} \cos (t \log p_\ell) \mathd t
= f(p_1 \cdot \ldots 
\cdot p_k) + O(2^k x^k)
$$
where $f$ is a multiplicative function defined by
$
f(p^{\alpha}) = \frac{1}{2^{\alpha}} \binom{\alpha} {\alpha / 2}
$
By convention the binomial coefficient is zero when $\alpha/2$ is
not an integer. 
\end{lemma}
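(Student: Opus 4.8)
The plan is to expand the product of cosines into exponentials and integrate term by term. Writing $\cos(t\log p_\ell) = \tfrac12\bigl(p_\ell^{\mathi t} + p_\ell^{-\mathi t}\bigr)$, we get
\[
\prod_{\ell=1}^k \cos(t\log p_\ell) = \frac{1}{2^k} \sum_{\varepsilon \in \{\pm 1\}^k} \Bigl(\prod_{\ell=1}^k p_\ell^{\varepsilon_\ell}\Bigr)^{\mathi t}.
\]
Each summand is of the form $(a/b)^{\mathi t}$ where $a,b$ are the products of the $p_\ell$ with $\varepsilon_\ell = +1$ and $\varepsilon_\ell = -1$ respectively. Integrating over $[T;2T]$, a term with $a = b$ contributes exactly $T$ to $\int_T^{2T}$, while a term with $a \neq b$ contributes $\int_T^{2T} (a/b)^{\mathi t}\,\mathd t = O\bigl(1/|\log(a/b)|\bigr) = O(\max(a,b)) = O(x^k)$, since $a,b \le \prod p_\ell \le x^k$ and $|\log(a/b)| \ge 1/\max(a,b)$ when $a \neq b$ are positive integers. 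There are $2^k$ terms in total, so the total error is $O(2^k x^k)$ after dividing by $T$; more precisely the error is $O(2^k x^k / T)$, but since the statement just asks for $O(2^k x^k)$ this is fine (and in applications $x^k$ will be a negligibly small power of $T$).

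It remains to identify the main term $\frac{1}{2^k} \cdot \#\{\varepsilon : \prod p_\ell^{\varepsilon_\ell} = 1\}$ with the claimed multiplicative function $f$ evaluated at $n := p_1 \cdots p_k$. Group the multiset $\{p_1,\ldots,p_k\}$ by distinct prime value: if a prime $p$ occurs with multiplicity $\alpha = \alpha_p$ in this multiset, then the diagonal condition $\prod p_\ell^{\varepsilon_\ell} = 1$ decouples across distinct primes and, for the block corresponding to $p$, requires exactly $\alpha/2$ of the signs to be $+1$ and $\alpha/2$ to be $-1$. The number of ways to do this is $\binom{\alpha}{\alpha/2}$ when $\alpha$ is even and $0$ otherwise. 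Hence $\#\{\varepsilon : \prod p_\ell^{\varepsilon_\ell}=1\} = \prod_p \binom{\alpha_p}{\alpha_p/2}$, and dividing by $2^k = \prod_p 2^{\alpha_p}$ gives $\prod_p \frac{1}{2^{\alpha_p}}\binom{\alpha_p}{\alpha_p/2} = f(n)$, which is multiplicative with $f(p^\alpha) = 2^{-\alpha}\binom{\alpha}{\alpha/2}$ as stated.

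There is no real obstacle here; the only point requiring a little care is the bound $|\log(a/b)| \gg 1/\max(a,b)$ for distinct positive integers $a \neq b$, which follows from $|\log(a/b)| \ge |a-b|/\max(a,b) \ge 1/\max(a,b)$ using $|\log y| \ge |1-y|$ for $y$ near... — more simply, $|\log(a/b)| \ge \log(1 + 1/\max(a,b)) \gg 1/\max(a,b)$. Everything else is bookkeeping of the exponential expansion and the combinatorial identification of the diagonal.
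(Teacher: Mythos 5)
Your proposal is correct and takes essentially the same approach as the paper: expand each cosine into exponentials, identify the diagonal (zero-frequency) terms with $f(p_1\cdots p_k)$ via the combinatorics of balanced sign choices, and bound each of the $\le 2^k$ off-diagonal terms by $O(x^k)$ using $|\beta_1\log q_1+\cdots+\beta_r\log q_r|\gg x^{-k}$. The only cosmetic difference is that the paper first groups the $p_\ell$ by distinct prime and applies the binomial theorem to each $\cos(t\log q_i)^{\alpha_i}$, while you expand all $k$ cosines directly into $2^k$ signed exponentials; both routes yield the same count and error.
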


\begin{proof}

Write $n = p_1 \cdot \ldots \cdot p_k = q_1^{\alpha_1} \cdot \ldots q_r^{\alpha_r}$
with $q_i$ mutually distinct primes. Notice that
\begin{eqnarray*}
\cos(t \log q_i)^{\alpha_i} & = & \frac{1}{2^{\alpha_i}} \cdot 
\left ( e^{\mathi t \log q_i} + e^{- \mathi t \log q_i} \right )^{\alpha_i} \\ 
 & = &
\frac{1}{2^{\alpha_i}} \binom{\alpha_i}{\alpha_i / 2} + 
  \sum_{\alpha_i /2 \neq \ell \leq \alpha_i}
\frac{1}{2^{\alpha_i}} \binom{\alpha_i}{\ell} e^{\mathi (\alpha_i - 2\ell) t \log q_i} 
\end{eqnarray*}
Note that the leading coefficients in front of every $e^{\mathi(\alpha_i - 2\ell)}$ is $\leq 1$ in
absolute value. Furthermore $|\alpha_i - 2\ell| \leq \alpha_i$. Therefore
$$
\frac{1}{T} \int_{T}^{2T} \prod_{\ell = 1}^{k} \cos(t \log p_\ell) \mathd t
= \frac{1}{T} \int_{T}^{2T} \prod_{i = 1}^{r} \cos(t \log q_{i})^{\alpha_i} \mathd t
= f(n )
+ (\ldots)
$$
where inside $(\ldots)$ there is a sum of $\leq (\alpha_1 + 1) \cdot \ldots
\cdot (\alpha_r + 1) \leq 2^{\alpha_1} \cdot \ldots \cdot 2^{\alpha_r} = 2^k$ 
terms of the form $\gamma e^{\mathi t 
(\beta_1 \log q_1 + \ldots \beta_r \log q_r)}$
with $|\gamma| \leq 1$ and integers $|\beta_i| \leq \alpha_i$. 
Since $\beta_1 \log q_1 + \ldots + \beta_r \log q_r \gg x^{-k}$
the integral over $T \leq t \leq 2T$ of each of these terms is at 
most $\ll x^k$ . Since there is $\leq 2^k$ of these terms and each has 
a leading coefficient $|\gamma| \leq 1$ the integral over 
$T \leq t \leq 2T$ of all the terms inside $(\ldots)$ 
contributes at most $O(2^k x^k)$.
\end{proof}

\begin{lemma}
  \label{Lemma2}Let $x = T^{1 / (\log\log T)^2}$.
  Then,
  \[ \frac{1}{T} 
  \int_T^{2 T} \left( \mathfrak{R} \sum_{p \leqslant x} \frac{1}{p^{\tfrac 12 +
     \mathrm{i} t}} \right)^k \mathrm{d} t = \frac{1}{2 \pi \mathrm{i}} \oint
     \prod_{p \leqslant x} I_0 \left( \frac{z}{\sqrt{p}} \right) \cdot
     \frac{\mathrm{d} z}{z^{k + 1}} + O \left( (2 x)^{k} \right) \]
  where $I_0 (z) = (1 / \pi) \int_0^{\pi} e^{z \cos \theta} \mathrm{d} \theta
  = \sum_{n \geqslant 0} (z / 2)^{2 n} \cdot 1 / (n!)^2$ is the modified 0-th
  order Bessel function. 
\end{lemma}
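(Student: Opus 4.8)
The plan is to expand the $k$-th power into a sum over ordered $k$-tuples of primes, average term by term with Lemma \ref{Lemma1}, and then recognise the surviving arithmetic sum as a factorial multiple of the coefficient that the contour integral extracts.

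First I would write $\mathfrak{R}\, p^{-1/2-\mathrm{i}t}=p^{-1/2}\cos(t\log p)$ and multiply out:
\[
\left(\mathfrak{R}\sum_{p\leq x}\frac{1}{p^{1/2+\mathrm{i}t}}\right)^{k}
=\sum_{p_{1},\ldots,p_{k}\leq x}\frac{1}{\sqrt{p_{1}\cdots p_{k}}}\prod_{\ell=1}^{k}\cos(t\log p_{\ell}).
\]
Averaging over $t\in[T;2T]$ and applying Lemma \ref{Lemma1} to each tuple, the inner average is $f(p_{1}\cdots p_{k})$ with an error $O(2^{k}x^{k}/T)$; since $1/\sqrt{p_{1}\cdots p_{k}}\leq 1$ and there are at most $\pi(x)^{k}\leq x^{k}$ tuples, these errors total $\ll x^{k}\cdot 2^{k}x^{k}/T=(2x^{2})^{k}/T$, which is $\ll(2x)^{k}$ as soon as $x^{k}\leq T$, i.e.\ $k\leq(\log\log T)^{2}$ — comfortably covering the range $k\ll\log\log T$ used in the application. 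So the task reduces to evaluating $\sum_{p_{1},\ldots,p_{k}\leq x}f(p_{1}\cdots p_{k})/\sqrt{p_{1}\cdots p_{k}}$.

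For that I would use $I_{0}(w)=\tfrac{1}{\pi}\int_{0}^{\pi}e^{w\cos\theta}\,\mathrm{d}\theta$ together with the elementary evaluation $\tfrac{1}{\pi}\int_{0}^{\pi}\cos^{m}\theta\,\mathrm{d}\theta=2^{-m}\binom{m}{m/2}=f(p^{m})$ (equal to $0$ for odd $m$; this is exactly the computation underlying the proof of Lemma \ref{Lemma1}), which yields the Taylor expansion
\[
I_{0}\!\left(\frac{z}{\sqrt p}\right)=\sum_{m\geq 0}\frac{f(p^{m})}{m!}\cdot\frac{z^{m}}{p^{m/2}}.
\]
Multiplying over $p\leq x$ and using multiplicativity of $f$, the coefficient of $z^{k}$ in the entire function $\prod_{p\leq x}I_{0}(z/\sqrt p)$ is $\sum\frac{f(n)}{\sqrt n}\prod_{p}\frac{1}{m_{p}!}$, summed over exponent vectors $(m_{p})_{p\leq x}$ with $\sum_{p}m_{p}=k$ and $n=\prod_{p}p^{m_{p}}$. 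On the other hand, grouping the ordered tuples $(p_{1},\ldots,p_{k})$ by the multiset of primes they determine — a multiset in which $p$ has multiplicity $m_{p}$ arises from exactly $k!/\prod_{p}m_{p}!$ ordered tuples, each of product $\prod_{p}p^{m_{p}}=n$ — gives $\sum_{p_{1},\ldots,p_{k}\leq x}f(p_{1}\cdots p_{k})/\sqrt{p_{1}\cdots p_{k}}=k!\sum\frac{f(n)}{\sqrt n}\prod_{p}\frac{1}{m_{p}!}$. Since $\tfrac{1}{2\pi\mathrm{i}}\oint g(z)\,z^{-k-1}\,\mathrm{d}z$ picks off the coefficient of $z^{k}$ in $g(z)=\prod_{p\leq x}I_{0}(z/\sqrt p)$, the main term equals $k!$ times the contour integral, which (with the error above, in fact of the sharper size $(2x)^{k}/T$) gives the stated formula.

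I do not anticipate a genuine obstacle; the one place to be careful is the combinatorial bookkeeping in the last step — distinguishing ordered from unordered tuples and tracking the resulting factor $k!$, which is precisely the normalisation that makes $\prod_{p\leq x}I_{0}(z/\sqrt p)$ the moment generating function of $\mathfrak{R}\sum_{p\leq x}p^{-1/2-\mathrm{i}t}$ and is what is needed in the next section. The only arithmetic input beyond Lemma \ref{Lemma1} is the crude count of prime tuples, which is what limits $k$ to be $\ll\log\log T$ (harmless given $x=T^{1/(\log\log T)^{2}}$).
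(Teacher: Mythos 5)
Your proof follows essentially the same route as the paper's: expand the $k$-th power into a sum over ordered prime $k$-tuples, apply Lemma \ref{Lemma1} termwise, recognize the resulting arithmetic sum as $k!$ times the coefficient of $z^k$ in $\prod_{p\leq x} I_0(z/\sqrt{p})$ (the weight $\prod_p 1/m_p!$ you introduce is exactly the multiplicative function the paper calls $g$), and extract that coefficient by Cauchy's formula. Your error bookkeeping is in fact a touch sharper than what the paper records — you arrive at $(2x^2)^k/T$, which is $\ll(2x)^k$ once $k\leq(\log\log T)^2$, whereas the paper's own derivation writes $O((2x)^{2k})$, a small mismatch with the lemma's stated $O((2x)^k)$ that your accounting explains.
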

\begin{proof}
Given an integer $n = p_1^{\alpha_1} \cdot \ldots \cdot
p_{\ell}^{\alpha_{\ell}}$ with $\Omega (n) = k$ and $p_i \leqslant x$ there
are $k! / (\alpha_1 ! \ldots \alpha_{\ell} !)$ ways in which this integer can
be written as a product of $k$ primes. We define a multiplicative function $g
(n)$ by $g (p^{\alpha}) = 1 / \alpha !$ and $g (p^{\alpha}) = 0$, $p > x$, so
that $k!g (n)$ is equal to the number of ways in which an integer $n$ with
$\Omega (n) = k$ can be expressed as a product of $k$ primes $\leqslant x$.

By Lemma \ref{Selberg_lemma} and the above observation,
\begin{eqnarray*}
  \frac{1}{T} \int_T^{2 T} \left( \mathfrak{R} \sum_{p \leqslant x} \frac{1}{p^{\tfrac 12 +
  \mathi t}} \right)^k \mathd t & = & \sum_{p_1, \ldots, p_k \leqslant x}
  \frac{f (p_1 \cdot \ldots \cdot p_k)}{\sqrt{p_1 \cdot \ldots \cdot p_k}} + O
  \left( (2 x)^{2 k} \right)\\
  & = & k! \sum_{\Omega (n) = k} \frac{f (n)}{\sqrt{n} } \cdot g (n) + O
  \left( (2 x)^{2 k} \right)
\end{eqnarray*}
We detect the condition $\Omega (n) = k$ by using Cauchy's integral
formula. Thus the above is equal to
\[ \frac{k!}{2 \pi \mathi} \oint \sum_{n \geqslant 1} \frac{f (n)}{\sqrt{n}}
   \cdot g (n) z^{\Omega (n)} \cdot \frac{\mathd z}{z^{k + 1}} + O \left( (2
   x)^{2 k} \right) \]
Since the functions $f, g$ and $z^{\Omega (n)}$ are multiplicative, the above
sum over $n \geqslant 1$ does factor into an Euler product,
\[ \prod_{p \leqslant x} \left( 1 + \sum_{\ell \geqslant 1} 
\left ( \frac{z}{2\sqrt{p}} \right)^{2\ell}
\frac{1}{(2 \ell) !} \cdot 
   \binom{2\ell}{\ell} \right) = \prod_{p \leqslant x} I_0 \left( \frac{z}
         {\sqrt{p}}
   \right) \]
as desired. \end{proof}

We are now ready to prove Proposition \ref{Proposition2}.

\begin{proof} [Proof of Proposition \ref{Proposition2}] By Lemma \ref{SoundLemma},
\[ \int_T^{2 T} \left| \mathfrak{R} \sum_{p \leqslant x} \frac{1}{p^{\tfrac 12 +
   \mathi t}} \right|^{2 k} \mathd t \ll Tk! \cdot \left( \sum_{p \leqslant x}
   \frac{1}{p} \right)^k \]
Therefore the set of those $t \in [T ; 2 T]$ for which $|\mathfrak{R} \sum_{p
\leqslant x} p^{- \tfrac 12 - \mathi t} | \geqslant \log_2 T$ has measure
$\ll T (k / \log_2 T)^k$ . Choosing $k = \lfloor \log_2 T / e \rfloor$, this
measure is $\ll T (\log T)^{- \delta}$ where $\delta = 1 / e$, which is
negligible (also, the exact value of $\delta$ is unimportant). 
Thus, we can restrict our attention to the set 
$$
A \assign \left \{ t \in [T;2T] :  \left | \Re \sum_{p \leq x} \frac{1}{p^{\tfrac 12 +
\mathi t}} \right | \leq \log_{2} T \right \}
$$
We denote its complement in $[T;2T]$ by $A^{c}$. 
For complex $|z| \leqslant \varepsilon < \tfrac {1}{100}$,
\begin{equation}
  \int_A \exp \left( z\mathfrak{R} \sum_{p \leqslant x} \frac{1}{p^{\tfrac 12 +
  \mathi t}} \right) \mathd t 
  = \sum_{k \leqslant 3\mathcal{V}} \frac{z^k}{k!} \int_A
  \left( \mathfrak{R} \sum_{p \leqslant x} \frac{1}{p^{\tfrac 12 + \mathi t}}
  \right)^k \mathd t + O (T (\log T)^{- 3}) \label{gf}
\end{equation}
where $\mathcal{V} = \log\log T$. The error term arises from bounding the
terms with $k \geqslant 3\mathcal{V}$: each contributes at most $ T (\varepsilon
\log\log T)^k / k! \leq T e^{-k}$ since the integral over the set $A$ is less than
$T (\log\log T)^k$ by definition of $A$. To compute the moments
with $k \leqslant 3\mathcal{V}$ we use Cauchy's inequality and notice that
\begin{multline}
  \left | \int_A \left( \mathfrak{R} \sum_{p \leqslant x} \frac{1}{p^{\tfrac 12 +
  \mathi t}} \right)^k \mathd t - \int_T^{2 T} \left( \mathfrak{R} \sum_{p
  \leqslant x} \frac{1}{p^{\tfrac 12 + \mathi t}} \right)^k \mathd t \right |
  \leqslant 
  \\ \leqslant  \sqrt{\text{meas} \left( A^c \right)} \cdot \left(
  \int_T^{2 T} \left| \mathfrak{R} \sum_{p \leqslant x} \frac{1}{p^{\tfrac 12 +
  \mathi t}} \right|^{2 k}_{} \mathd t \right)^{\tfrac 12}  
  \ll T \cdot (\log T)^{- \delta / 2} \cdot \sqrt{k!} \cdot
  (\log\log T)^{k / 2}
\end{multline}
by Lemma \ref{SoundLemma}. The integral over $T \leqslant t \leqslant 2 T$
is readily available through Lemma \ref{Lemma2}. Thus,
\[ \int_A \left( \mathfrak{R} \sum_{p \leqslant x} \frac{1}{p^{\tfrac 12 + \mathi
   t}} \right)^k \mathd t = \frac{Tk!}{2 \pi \mathi} \oint \prod_{p \leqslant
   x} I_0 \left( \frac{w}{\sqrt{p}} \right) \cdot \frac{\mathd w}{w^{k + 1}} + O
   \left( \frac{T \sqrt{k!} \cdot (\log\log T)^{k / 2}}{(\log T)^{\delta / 2}} \right) \]
We choose the contour to be a circle of radius $r = 2$ (say) around the
origin. Summing the above over $k \leqslant 3\mathcal{V}$ we conclude that
(\ref{gf}) is equal to
\begin{eqnarray*}
  &  & \frac{T}{2 \pi \mathi} \oint \prod_{p \leqslant x} I_0 \left( \frac{
  w}{\sqrt{p}} \right) \sum_{k \leqslant 3\mathcal{V}} \frac{z^k}{w^{k + 1}} \cdot
  \mathd w + O \left( T (\log T)^{- \delta/2 + \varepsilon} \right)
\end{eqnarray*}
Since $\prod_{p \leqslant x} I_0 (w / \sqrt{p}) \ll (\log T)^2$ on $|w| = 2$, and
$\sum_{k \geqslant 3\mathcal{V}} |z / w|^k \ll (\log T)^{- 3 \log 2 + \varepsilon}$ we can
complete the sum over $k \leqslant 3\mathcal{V}$ to all $k$, making an error
of $T (\log T)^{- \delta / 5}$. Thus the above (and hence (\ref{gf})) is equal to
\begin{equation}
\label{Bessel}
  \frac{T}{2 \pi \mathi} \oint \prod_{p \leqslant x} I_0 \left( \frac{
  w}{\sqrt{p}} \right) \cdot \frac{\mathd w}{w - z} + O \left( T (\log T)^{- \delta /
  5} \right)
  = T \cdot \prod_{p \leqslant x} I_0 \left( \frac{z}{\sqrt{p}} \right) + O
  \left( T (\log T)^{- \delta / 5} \right)
\end{equation}
by Cauchy's formula. But,
$
\prod_{p \leqslant x} I_0 \left (\frac{z}{\sqrt{p}} \right ) = F(z) \cdot \exp \left ( \frac{z^2}{2} \cdot 
\tfrac{1}{2} \sum_{p \leq x}
\frac{1}{p} \right ) \cdot ( 1 + O(1/\log T))
$
with $F(z)$ analytic around $z = 0$ and equal to $1$ at $z = 0$. 
Since $|z| \leqslant \varepsilon$
it is clear that the error term in (\ref{Bessel}) can be made relative at a small cost in the 
error term. We conclude that
$$
\int_{A} \exp \left ( z \cdot \Re \sum_{p \leq x} \frac{1}{p^{\tfrac 12 + \mathi t}}
\right ) \mathd t = T F(z) \cdot \exp \left ( \frac{z^2}{2} \cdot \tfrac 12
\sum_{p \leq x} \frac{1}{p} \right ) \cdot ( 1 + O((\log T)^{-\delta / 10} )
$$
uniformly in $|z| \leq \varepsilon$. 
Thus Lemma
\ref{Hwang} is applicable, and it follows that
\[ \frac{1}{\text{meas}(A)} \cdot \underset{t \in A}{\text{meas}} \left\{ \mathfrak{R}
   \sum_{p \leqslant x} \frac{1}{p^{\tfrac 12 + \mathi t}} \geqslant \Delta \cdot
   \sqrt{\tfrac 12 \sum_{p \leqslant x} \frac{1}{p}} \right\} \sim
   \frac{T}{meas(A)} \int_{\Delta}^{\infty} e^{- u^2 / 2} \cdot \frac{\mathd u}{\sqrt{2 \pi}}
\]
Now $\text{meas}(A^c) \ll T (\log T)^{-\delta}$ while $\int_{\Delta}^{\infty} e^{-u^2/2} \frac{\mathd t}{\sqrt{2\pi}} \gg (\log T)^{-o(1)}$ (because $\Delta = o(\sqrt{\log\log T}$). Hence the preceding equation becomes
\[ \frac{1}{T} \cdot \underset{t \in [T ; 2 T]}{\text{meas}} \left\{
   \mathfrak{R} \sum_{p \leqslant x} \frac{1}{p^{\tfrac 12 + \mathi t}} \geqslant
   \Delta \cdot \sqrt{\tfrac 12 \sum_{p \leqslant x} \frac{1}{p}} \right\} \sim
   \int_{\Delta}^{\infty} e^{- u^2 / 2} \cdot \frac{\mathd u}{\sqrt{2 \pi}}
\]
This establishes our claim\end{proof}


\section{ Proof of Theorem 1.}

\begin{proof}[Proof of Theorem \ref{Theorem1}] Let $x = T^{1 / (\log\log T)^2}$.
Let $1/4 > \delta > 0$ to be fixed later. By Lemma \ref{Selberg_lemma} the
set of those $t \in [T ; 2 T]$ for which
$$
  \left | \log | \zeta (\tfrac 12 + \mathi t) | -\mathfrak{R} \sum_{p \leqslant x}
  \frac{1}{p^{\tfrac 12 + \mathi t}} \right | \geqslant \mathcal{L} \assign
  (\log\log T)^{\tfrac 14 + \delta}
$$
has measure $\ll T \cdot \mathcal{L}^{- 2 k} \cdot A^k \cdot k^{4 k}
\cdot (\log_3 T)^k$ for all $k \geq \log_3 T$. Choosing $k = \lfloor 
\mathcal{L}^{1/2}/ A e \rfloor$
we obtain a measure of $\ll T \exp (- c \mathcal{L}^{1/2}$) for some
constant $c > 0$. Thus except for a
set of measure $\ll T \exp (-c \mathcal{L}^{1/2})$,
$$
\log |\zeta(\tfrac 12 + \mathi t)| = \Re \sum_{p \leq x} \frac{1}{p^{\tfrac 12 + \mathi t}}
+ \theta \mathcal{L}
$$
with a $|\theta| \leqslant 1$ depending on $t$. The measure of those $t \in [T;2T]$ 
for which
\begin{equation}
\label{main_ineq}
\Re \sum_{p \leq x} \frac{1}{p^{\tfrac 12 + \mathi t}} \geqslant \Delta \cdot \sqrt{\tfrac 12 
\cdot \log \log T} + \theta \mathcal{L}
\end{equation}
is (since $\theta \leq 1$) at least the measure of those $t \in [T;2T]$
for which 
$$\Re \sum_{p \leq x} \frac{1}{p^{\tfrac 12 + \mathi t}}
\geq \Delta \cdot \sqrt{\tfrac 12 \log\log T} + \mathcal{L}$$ 
and this measure is at least, 
\begin{equation} \label{lo}
T (1 + o(1)) \int_{\Delta + 2\mathcal{L}/ \sqrt{\log\log T}}^{\infty} e^{- u^2
   / 2} \cdot \frac{\mathd u}{\sqrt{2 \pi}}
\end{equation} by Proposition \ref{Proposition2}. 
Similarly (using this time $\theta \geq -1$) the measure of those
$t \in [T;2T]$ for which (\ref{main_ineq}) holds is at most
\begin{equation} \label{up}
T (1 + o(1)) \int_{\Delta - 2\mathcal{L} / \sqrt{\log\log T}}^{\infty} e^{- u^2 / 2} \cdot \frac{\mathd u}{\sqrt{2 \pi}}.
\end{equation}
When $\Delta \mathcal{L} = o((\log\log T)^{1/2})$ both (\ref{lo}) and (\ref{up})
are equal to $T \int_{\Delta}^{\infty} e^{- u^2 / 2} \cdot \frac{\mathd
u}{\sqrt{2 \pi}} \cdot (1 + o (1))$. We conclude that,
\[ \frac{1}{T} \cdot \underset{t \in [T ; 2 T]}{\text{meas}} \left\{
   \frac{\log | \zeta (\tfrac 12 + \mathi t) |}{\sqrt{\tfrac 12 \log\log T}}
   \geqslant \Delta \right \} = \frac{(1 + o (1))}{\sqrt{2 \pi}}
   \int_{\Delta}^{\infty} e^{- u^2 / 2} \mathd u + O \left( e^{-c \mathcal{L}^{1
   / 2}} \right) \]
provided that $\Delta \mathcal{L}= o (\log\log T)^{1/2}$. The main term
dominates the error term as long as 
$\Delta \leqslant \mathcal{L}^{1/4 - \varepsilon}$.
Both conditions $\Delta \leqslant \mathcal{L}^{1/4 - \varepsilon}$ and $\Delta
\mathcal{L}= o (\log\log T)^{1/2}$ are met if we choose $\delta = 3 /
20$ and require that $\Delta \leqslant (\log\log T)^{1 / 16 + \delta / 4 - \varepsilon} = (\log \log T)^{1/10 - \varepsilon}$.
\end{proof}

\section{Proof of Proposition 2}

We will only give a brief sketch.

\begin{lemma}
Let $C > 0$ be given. Then, 
uniformly in $0 \leq \Re z \leq C$, and $4C^2 \leq |\Im z|^2 \leq x^{1/8}$,
$$
\prod_{p \leq x} 
I_0 \left (\frac{z}{\sqrt{p}}\right) \ll \exp(- c (\Im z)^{2})
$$
for some constant $c > 0$. 
\end{lemma}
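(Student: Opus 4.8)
The plan is to estimate $\prod_{p\le x} I_0(z/\sqrt p)$ by taking logarithms and using the Taylor expansion of $\log I_0$ near the origin, exploiting the oscillation coming from $\Im z$. First I would recall that $\log I_0(w) = \tfrac14 w^2 - \tfrac1{64} w^4 + \dots$ for small $w$, and more precisely that for $|w|\le 1$ one has $\log I_0(w) = \tfrac14 w^2 + O(|w|^4)$ with a real-analytic error. Writing $z = \sigma + \mathrm{i}\tau$ with $0\le\sigma\le C$ and $4C^2 \le \tau^2 \le x^{1/8}$, for primes $p > z^2$ (say $p > \tau^2$, which is $\le x^{1/8}$ so there are plenty of primes left up to $x$) the argument $z/\sqrt p$ is small, and
\[
\Re \log I_0\!\left(\frac{z}{\sqrt p}\right) = \frac{\Re(z^2)}{4p} + O\!\left(\frac{|z|^4}{p^2}\right) = \frac{\sigma^2 - \tau^2}{4p} + O\!\left(\frac{|z|^4}{p^2}\right).
\]
Summing over $\tau^2 < p \le x$ gives a main term $\tfrac{\sigma^2-\tau^2}{4}\sum_{\tau^2 < p\le x}\tfrac1p$, and since $\tau^2 \le x^{1/8}$ we have $\sum_{\tau^2<p\le x}\tfrac1p = \log\log x - \log\log(\tau^2) + o(1) \ge \tfrac12\log\log x + O(1) \gg \log\log x$; because $\tau^2$ dominates $\sigma^2 \le C^2 \le \tau^2/4$, this contributes $\le -c\,\tau^2 \log\log x$ for a suitable $c>0$, which already beats the target $-c\tau^2$ comfortably. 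The tail error $\sum_{p>\tau^2}|z|^4/p^2 \ll |z|^4/\tau^2 \ll \tau^2$ is under control (and can be absorbed, or the constant adjusted).

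Next I would handle the small primes $p \le \tau^2$, where $z/\sqrt p$ need not be small and the Taylor expansion is invalid. For these I would use the crude bound $|I_0(w)| \le I_0(|w|) \le e^{|w|}$ valid for all $w$ (since $I_0(w) = \sum (w/2)^{2n}/(n!)^2$ has nonnegative coefficients), giving
\[
\Re\sum_{p\le\tau^2}\log I_0\!\left(\frac{z}{\sqrt p}\right) \le \sum_{p\le\tau^2}\frac{|z|}{\sqrt p} \ll |z|\cdot\frac{\sqrt{\tau^2}}{\log\tau^2} \ll \frac{\tau^2}{\log\tau},
\]
using $|z| \ll \tau$ and $\sum_{p\le y}p^{-1/2}\ll \sqrt y/\log y$. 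This is $o(\tau^2\log\log x)$, hence negligible against the gain from the large primes. Combining, $\Re\log\prod_{p\le x}I_0(z/\sqrt p) \le -c\tau^2\log\log x + o(\tau^2\log\log x) \le -c\tau^2$, which is what we want (indeed much stronger).

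The main obstacle is making the Taylor estimate for $\log I_0$ genuinely uniform on the relevant range and correctly tracking the crossover prime $p \asymp \tau^2$: one needs $\log I_0(w) = \tfrac14 w^2 + O(|w|^4)$ to hold with an absolute implied constant for $|w|$ up to some fixed threshold, and to know $I_0$ is zero-free in a neighborhood of $0$ so that $\log I_0$ is analytic there. A secondary point is ensuring $\tau^2 \le x^{1/8}$ really does leave a positive proportion of the logarithmic mass $\sum_{p\le x}\tfrac1p$ among primes exceeding $\tau^2$ — this is where the hypothesis $|\Im z|^2 \le x^{1/8}$ is used, and it is exactly why the exponent $1/8$ (or any fixed exponent $<1$) suffices. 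Everything else — the bound $|I_0(w)|\le e^{|w|}$, Mertens' theorem, and $\sum_{p\le y}p^{-1/2}\ll\sqrt y/\log y$ — is routine.
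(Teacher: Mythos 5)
Your approach matches the paper's own: both proofs split the product at $p\asymp|z|^2$, bound the small-prime factors by the crude inequality $|I_0(w)|\le e^{|w|}$, and extract the Gaussian decay from the quadratic term of $\log I_0$ over the large primes (the paper packages this as $I_0(w)=\bigl[I_0(w)e^{-w^2/4}\bigr]\cdot e^{w^2/4}$ with $|I_0(w)e^{-w^2/4}|\ll\exp(|w|^4)$, while you write $\log I_0(w)=\tfrac14 w^2+O(|w|^4)$ -- the same information). One point in your write-up, though, needs correcting because it hides where the argument is actually tight. The assertion $\sum_{\tau^2<p\le x}p^{-1}\gg\log\log x$ is false near the top of the range: when $\tau^2\asymp x^{1/8}$ the sum equals $\log\log x-\log\log(\tau^2)+o(1)=\log 8+o(1)$, which is bounded below by a positive constant but is certainly not $\gg\log\log x$. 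As a result the main term is only $\asymp -\tau^2$, and your Taylor-error term, which you estimate as $O(\tau^2)$, is of the same order -- it cannot simply be described as "under control," since the crossover primes $p$ slightly above $\tau^2$ have $|z/\sqrt p|$ of order $1$ where the $O(|w|^4)$ bound has no useful constant. The repair you gestured at is exactly what's needed and should be made explicit: split instead at $p>M\tau^2$ for a large fixed $M$, handle $\tau^2<p\le M\tau^2$ by the crude bound (cost $\ll\sqrt{M}\,\tau^2/\log\tau$, which is $o(\tau^2)$ as $\tau\to\infty$ and $O_{M,C}(1)$ otherwise), and observe that on $p>M\tau^2$ one has $|z/\sqrt p|\le 1/\sqrt{M}$, so the quartic error contributes $\ll\tau^2/M$ -- genuinely smaller than the $\asymp\tau^2$ main term once $M$ is large. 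With this adjustment (and allowing $c$ to depend on $C$, as the statement permits) your proof is complete and coincides in substance with the paper's.
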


\begin{proof} 
For $|z|^2 \leq x$ we write
$$
\prod_{p \leq x} I_0 \left ( \frac{z}{\sqrt{p}} \right )
= \prod_{p \leq |z|^2} I_0 \left ( \frac{z}{\sqrt{p}} \right )
\cdot \prod_{|z|^2 \leq p \leq x} I_0 \left ( \frac{z}{\sqrt{p}} \right ) 
e^{-(z/2\sqrt{p})^{2}} \cdot \exp \left ( -\frac{z^2}{2} \cdot \frac{1}{2}
\sum_{|z|^2 \leq p \leq x} \frac{1}{p} \right )
$$
Using the bound $I_0(z) \ll \exp(-|\Im z|)$ the first term is 
$
\ll \exp(- c |z|^{3/2} / \log |z|)
$
Using the bound $I_0 (z) e^{-(z/2)^2} \ll \exp(|z|^{4})$ we get that the
contribution of the second term is $\ll \exp(c |z|^2 / \log |z|)$. Finally
the third term is a Gaussian and thus contributes
$
\ll \exp( (1/2) ((\Re z)^2 - (\Im z)^2) (\log (\log x / 2 \log |z|))
$
. Under our assumptions on $z$ and $x$ 
the last bound dominates and the claim follows. 
\end{proof} 


\begin{proof}[Proof of Proposition 2]
By a small modification of the proof of Proposition \ref{Proposition2}, for any fixed $\delta > 0$, uniformly in $z$, 
\begin{equation*}
\int_{A} \exp \left ( z \cdot \Re \sum_{p \leq x} \frac{1}{p^{\tfrac 12 + \mathi t}} \right ) \cdot \mathd t
= T \prod_{p \leq x} I_0 \left ( \frac{z}{\sqrt{p}} \right )  + O(
(T \log T)^{-k^2-1})
\end{equation*}
with the set $A$ and the function $F(\cdot)$ as in the proof of Proposition \ref{Proposition2}, except that now $A$ is the set
of those $t$'s at which the Dirichlet polynomial $\Re \sum_{p \leq x} p^{-1/2 - \mathi t}
\leq C \log\log T$ with some $C$ large enough
but fixed. The constant $C$ is choosen so large so as to guarantee an error
of $T (\log T)^{-k^2-1}$ above and
$\text{meas}(A^c) \ll T (\log T)^{-k^2 - 1}$. 

Let $H(z;x) = \prod_{p \leq x} I_0 (z / \sqrt{p}) \exp(-z \Delta \sigma(x))$
where $\sigma(x) = (\tfrac 12 \sum_{p \leq x} \frac{1}{p})^{1/2}$.  
By Tenenbaum's formula (see equation (10) and above in \cite{Tenenbaum}),
\begin{multline}
\label{saddle}
\underset{t \in A}{\text{meas}} \left \{ \Re \sum_{p \leq x} \frac{1}{p^{\tfrac 12 + \mathi t}} \geq
\Delta \sqrt{\tfrac 12 \sum_{p \leq x} \frac{1}{p}} \right \} = 
\frac{\text{meas}(A)}{2\pi\mathi} 
\int_{c - \mathi \infty}^{c + \mathi \infty} 
H(s;x)
\frac{T \mathd s}{s ( s + T)} + \\
+  O \left ( \left | \frac{T}{2\pi} 
\int_{c - \mathi\infty}^{c + \mathi \infty} 
\frac{H(s;x) T \mathd s}{(s + T) (s + 2T)}
\right | \right ) + O(T (\log T)^{-k^2 - 1})
\end{multline}
where $T = x^{1/16}$ and $c = \Delta \cdot ( \sigma(x) )^{-1/2}$. 
By the previous Lemma (used to handle the range $\Im s < x^{1/8}$ 
and the fast decay of $1/s (s + T)$ (used to handle $\Im s > x^{1/8}$) 
we can truncate the above integral
at $|\Im s| \asymp \psi(x) / \sigma(x)$ with $\psi(x) \rightarrow
\infty$ very slowly. This induces an negligible error
term of $o(\int_{\Delta}^{\infty} e^{-u^2/2} \mathd u)$. Furthermore, since
$T$ is so large, the $O(\cdot)$ term in the above equation is also 
negligible. Write $
\prod_{p \leq x} I_0 ( z / \sqrt{p} ) = F(z)
\cdot \exp(\frac{z^2}{2} \cdot \sigma(x)^2)
$
with $F(z)$ analytic. Then, on completing the square and
changing variables the first integral above 
(truncated at $|\Im s| \asymp \psi(x)/\sigma(x)$) can be rewritten as
$$
\text{meas}(A) \cdot \frac{e^{-\Delta^{2}/2}}{\sigma(x)}
\cdot \frac{1}{2\pi} \int_{-\psi(x)}^{\psi(x)} F \left ( c + \frac{\mathi v}{
\sigma(x)} \right )  \frac{e^{-v^2/2} \mathd v}{c + \mathi v / \sigma(x)}
$$
Let $G(z) = F(z)/z$. Then 
$G(c + \mathi v / \sigma(x)) = G(c) + \mathi v G'(c) /
\sigma(x)
O(v^2 / \sigma(x)^2)$. 
Plugging this into the integral the middle term vanishes
while the last term contributes a negligible amount. Thus, we end up with
$$
F(c) \cdot \frac{e^{-\Delta^2/2}}{\sqrt{2\pi}\Delta} \cdot (1 + o(1)) 
= F(c) \int_{\Delta}^{\infty} e^{-u^2/2} \frac{\mathd u}{\sqrt{2\pi}} ( 1 + o(1))
$$
as a main term for (\ref{saddle}).
To conclude it suffices to notice that $F(c) \sim F(k)$ as $T \rightarrow 
\infty$, since $c \sim k$ and $F$ is analytic. Furthermore as in the proof of
Proposition \ref{Proposition2} the restriction to $t \in A$ in (\ref{saddle})
can be replaced by $t \in [T;2T]$ because $A$ is very close in measure to $T$
(that is, $\text{meas}(A^c) \ll T (\log T)^{-k^2 - 1}$). 
 
\end{proof}

\bibliography{largedev4}
\bibliographystyle{plain}

\end{document}